\newcommand{\lleft}{\left}
\newcommand{\rright}{\right}
\newcommand{\rrVert}{\Vert}
\newcommand{\llVert}{\Vert}
\newcommand{\eqref}[1]{(\ref{#1})}
\newtheorem{theorem}{Theorem}[section]
\newtheorem{lemma}[theorem]{Lemma}
\newtheorem{proposition}[theorem]{Proposition}
\newtheorem{corollary}[theorem]{Corollary}
\renewcommand{\vec}[1]{\mathbf{#1}}
\newcommand{\vecc}[1]{\bolds{#1}}
\newcommand{\R}{\mathbb{R}}
\newcommand{\Var}{\operatorname{Var}}
\newcommand{\Ker}{\operatorname{Ker}}
\newcommand{\range}{\operatorname{range}}
\newcommand{\Wdelta}{\mathcal{W}_\Delta}
\newcommand{\WN}{\mathcal{W}_N}
\newcommand{\W}{\mathcal{W}}
\begin{document}
\begin{frontmatter}

\title{Computing exact $D$-optimal designs by mixed integer
second-order cone programming}
\runtitle{Computing exact optimal designs by MISOCP}

\begin{aug}
\author[A]{\fnms{Guillaume}~\snm{Sagnol}\corref{}\ead[label=e1]{sagnol@zib.de}}
\and
\author[B]{\fnms{Radoslav}~\snm{Harman}\thanksref{F2}\ead[label=e2]{harman@fmph.uniba.sk}}
\runauthor{G. Sagnol and R. Harman}
\affiliation{Zuse Institut Berlin and Comenius
University, Bratislava}
\address[A]{Department Optimization\\
Zuse Institut Berlin\\
Takustr. 7\\
14195 Berlin\\
Germany\\
\printead{e1}}
\address[B]{Faculty of Mathematics,\\
\quad Physics and Informatics\\
Comenius University\\
Mlynsk\'{a} dolina\\
84248 Bratislava\\
Slovakia\\
\printead{e2}}
\end{aug}
\thankstext{F2}{Supported by the VEGA 1/0163/13 grant of the Slovak
Scientific Grant Agency.}

%
\received{\smonth{12} \syear{2013}}
%
\revised{\smonth{1} \syear{2015}}

%
\begin{abstract}
Let the design of an
experiment be represented by an $s$-\break dimensional vector
$\vec{w}$ of weights with nonnegative components. Let the quality of
$\vec{w}$ for the estimation of the parameters of the statistical model
be measured by the criterion of $D$-optimality, defined as the $m$th root
of the determinant of the information matrix $M(\vec{w})=\sum_{i=1}^s
w_iA_iA_i^T$, where $A_i, i=1,\ldots,s$ are known matrices with $m$ rows.

In this paper, we show that the criterion of $D$-optimality is second-order
cone representable. As a result, the method of second-order cone
programming can be used to compute an approximate $D$-optimal design with
any system of linear constraints on the vector of weights. More
importantly, the proposed characterization allows us to compute an
\emph{exact} $D$-optimal design, which is possible thanks to high-quality
branch-and-cut solvers specialized to solve mixed integer\break second-order cone
programming problems.
Our results extend to the case of the criterion of $D_K$-optimality,
which measures the quality of $\vec{w}$ for the estimation
of a linear parameter subsystem defined by a full-rank coefficient
matrix $K$.

We prove that some other widely used criteria are also second-order
cone representable,
for instance, the criteria of $A$-, $A_K$-, $G$- and $I$-optimality.


We present several numerical examples demonstrating the efficiency and
general applicability of the proposed method. We show that in many
cases the mixed
integer second-order cone programming approach allows us to find a~provably
optimal exact design, while the standard heuristics systematically miss the
optimum.
\end{abstract}

%
\begin{keyword}[class=AMS]
\kwd[Primary ]{62K05}
\kwd[; secondary ]{65K05}
\end{keyword}
\begin{keyword}
\kwd{Optimal experimental design}
\kwd{exact optimal designs}
\kwd{second-order cone programming}
\kwd{mixed integer programming}
\kwd{$D$-criterion}
\end{keyword}
\end{frontmatter}

\section{Introduction}\label{sec1}

Consider an optimal experimental design problem of the form
\begin{equation}
\label{theBasicProblem} \max_{\vec{w} \in\mathcal{W}} \Phi \Biggl( \sum
_{i=1}^s w_iA_iA_i^T
\Biggr),
\end{equation}
where $\Phi$ is a criterion mapping the space $\mathbb{S}^+_m$ of $m
\times m$ positive semidefinite matrices over the set $\mathbb{R}_+:=[0,\infty)$.
In \eqref{theBasicProblem},
$A_i \in\mathbb{R}^{m \times\ell_i}$,
$i=1,\ldots,s$ are known matrices, and $\mathcal{W}$ is a compact subset of
$\mathbb{R}^s_+$ representing the set of all permissible designs.

Problem~\eqref{theBasicProblem} arises in linear regression models with
a design space $\mathcal{X} \equiv[s]:=\{1,\ldots,s\}$, independent trials
and a vector $\bolds{\theta} \in\mathbb{R}^m$ of unknown parameters,
provided that
the trial in the $i$th design point results in an $\ell_i$-dimensional
response $\vec{y}_i$, satisfying $E(\vec{y}_i)=A_i^T\vecc{\theta}$ and
$\Var(\vec{y}_i)=\sigma^2 \vec{I}_{\ell_i}$, where $\vec{I}_k$ is the
$k \times k$-identity matrix.
For a design $\vec{w} \in\mathcal{W}$, the \emph{moment} matrix
$M(\vec{w}):=\sum_{i=1}^s w_i A_iA_i^T$ represents the total
information gained from the design $\vec{w}$.

When the criterion $\Phi$ satisfies certain properties,
problem~\eqref{theBasicProblem} can be interpreted as
selecting the weights $w_i$ that yield the \emph{most accurate
estimation} of $\vecc{\theta}$.
In this paper, we mainly focus on the $D$-optimal problem, where the criterion
$\Phi$ is set to
\begin{equation}
\label{phiD} \Phi_D: M \to(\det M)^{{1}/{m}}.
\end{equation}
In the case of Gaussian measurement error,
this corresponds to the problem of minimizing the volume of the
standard confidence ellipsoid
for the best linear unbiased estimator (BLUE) $\hat{\vecc{\theta}}$
of~$\vecc{\theta}$.

More generally, if the experimenter is interested in the estimation of
the parameter
subsystem $\vecc{\vartheta} = K^T \vecc{\theta}$, where $K$ is an $m
\times k$ matrix ($k\leq m$)
of full column rank [$\operatorname{rank}(K)=k$], a relevant criterion is
$D_K$-optimality, obtained when the $D$-criterion is applied to the
information matrix
$C_K(M)$ for the linear parametric subsystem given by the coefficient
matrix $K$,
defined by (Section~3.2 in~\cite{Puk93})
\[
C_K(M) = \mathop{\mathop{\min\nolimits_{\preceq}}_{L \in\R^{k \times m}}}_{LK=\vec{I}_k}
L M L^T.
\]
Here the minimum is taken with respect to L\"owner ordering, over all
left inverses $L$ of $K$.
This information matrix
is equal to $(K^TM^-K)^{-1}$ if the estimability condition holds
($\operatorname{range} K \subseteq\operatorname{range} M$);
otherwise $C_K(M)$ is a singular matrix, so
\begin{equation}
\label{phiDK} \Phi_{D|K}: M \to \cases{ \displaystyle \bigl(\det
K^T M^- K\bigr)^{-{1}/{k}}, &\quad $\mbox{if }\operatorname{range}  K
\subseteq \operatorname{range} M$;
\vspace*{2pt}\cr
0, &\quad$\mbox{otherwise}$.}
\end{equation}
In the previous formula $M^-$ denotes a generalized inverse of $M$,
that is, a matrix satisfying $MM^- M = M$.
Although $M^-$ is not unique in general, the definition of
$\Phi_{D|K}$ is consistent. Indeed, the matrix $K^T M^- K$ does not
depend on the
choice of the generalized inverse $M^-$ if the columns of $K$ are
included in the range of $M$;
cf. Pukelsheim~\cite{Puk93}.
Note that if $k=1$, that is, if the matrix $K=\vec{c}$ is a nonzero
vector, then the criterion
$\Phi_{D|K}$ is equivalent to the criterion of $\vec{c}$-optimality.



Other optimality criteria, such as
$A$, $A_K$, $G$ and $I$-optimality, are also discussed in the  \hyperref[secothercrits]{Appendix}.


In the standard form of the problem, $\mathcal{W}$ is the probability simplex
\[
\mathcal{W}_\Delta:=\Biggl\{\vec{w} \in\R_+^s: \sum
_{i=1}^s w_i = 1\Biggr\},
\]
and the design $\vec{w}$ is a weight vector indicating the proportions of
trials in the individual design points. This problem, called the \emph
{optimal approximate design problem} in the literature,
is in fact a relaxation of a much more difficult and more fundamental
discrete optimization problem:
the \emph{optimal exact design problem of size $N$},
where
$\mathcal{W}$ takes the form
\[
\mathcal{W}_N:=\Biggl\{\frac{\vec{n}}{N}: \vec{n} \in
\mathbb{N}_0^s, \sum_{i=1}^s
n_i = N\Biggr\}.
\]
Here, the experiment consists of $N$ trials,
and if $\vec{w}\in\mathcal{W}_N$, then $n_i = N w_i$ indicates the
number of trials
in the design point $i$. (In the above definition, $\mathbb{N}_0$
denotes the set of all nonnegative integers, i.e., $0\in\mathbb{N}_0$.)
Note that the constraint $\vec{w} \in\mathcal{W}_\Delta$
is obtained from $\vec{w} \in\mathcal{W}_N$ by relaxing the integer
constraints on $N w_i$.

Many different approaches have been proposed to solve problems of
type \eqref{theBasicProblem}.
However, most methods are specialized and work only if the feasibility
set $\W$
is the probability simplex $\Wdelta$ or the standard discrete simplex
$\WN$.
In the former case (approximate optimal design, $\W=\Wdelta$), the
traditional methods are the Fedorov--Wynn type vertex-direction
algorithms~\cite{Fed72,Wyn70}, and the multiplicative
algorithms~\cite{Tit76,STT78,Yu10a,Yu11},
eventually combined with adaptive changes of the finite grid $\mathcal
{X}$~\cite{HP07,YBT13,PZ14}.
In the latter case (exact optimal design, $\W=\WN$),
the classical methods are heuristics such as
exchange algorithms~\cite{Fed72,Mit74,AD92},
rounding methods~\cite{PR92}
and metaheuristics such as simulated annealing~\cite{Hai87} or genetic
algorithms~\cite{HCMBR03}.
For some small to medium size models,
branch-and-bound methods~\cite{Wel82} have been used to compute
provably optimal solutions.

In many practical situations, however, more complicated constraints
are imposed on the design~\cite{CF95}, and there is a need for more
general algorithms.
For example, assume that the experimental region can be partitioned as
$\mathcal{X}=\mathcal{X}_1 \cup\mathcal{X}_2$,
and that 40\% (resp.,  60\%) of the trials should be chosen in
$\mathcal{X}_1$ (resp.,  $\mathcal{X}_2$);
that is, the constraint $\vec{w} \in\mathcal{W}_\Delta$ is replaced by
\[
\vec{w} \in\mathcal{W}:=\biggl\{\vec{w} \in\R_+^s: \sum
_{i\in\mathcal
{X}_1} w_i = 0.4, \sum
_{i\in\mathcal{X}_2} w_i = 0.6\biggr\}.
\]
This is an example of a \textit{stratified design}~\cite{Har14},
which is a generalization of the well-known
\emph{marginally constrained design}~\cite{CT80}.
Other examples of relevant design domains $\mathcal{W}$ defined by a
set of linear
inequalities are discussed in~\cite{VBW98}.
For example, it is possible to consider a case in which a total budget
is allocated, and the design points
are associated to possibly unequal costs $c_1,\ldots,c_s$.
It is also possible to consider
decreasing costs when trials of specific design points are grouped,
or to avoid designs that are concentrated on a small number of design points.

For some special linear constraints, the approximate $D$-optimal design
problem can be solved by modifications
of the vertex-direction and the multiplicative algorithms (see,
e.g., \cite{CF95,MTL07,Har14}),
but the convergence of these methods is usually slow.
Recently, modern mathematical programming algorithms \cite{VBW98,FL00,HJ08,Sagnol09SOCP,FTH12,Pap12,LP13,Sagnol2013LAA}
have been gaining in popularity.
The idea is to reformulate the optimal design problem
under a canonical form that specialized solvers can handle,
such as \textit{maxdet programs} (MAXDET), \textit{semidefinite programs}
(SDP) or \textit{second-order cone programs} (SOCP).

Reformulating an optimal design problem as an SOCP or an SDP
is useful in many regards. First, it allows one to use modern
software to compute an optimal solution efficiently. Second,
the available interior point methods are
known to return an $\varepsilon$-optimal solution in polynomial time with
respect to the size of the instance
and $\log\frac{1}{\varepsilon}$ because a self-concordant barrier exists
for these problems; cf. \cite{BV04}.
Third, mathematical programming methods are general in the sense that
they are not restricted to the use of
\textit{special linear constraints}.
Nevertheless, the inclusion of general linear constraints within
mathematical programming
characterizations is not completely straightforward.
For instance, we show in Section~\ref{secformer}
that the SOCP formulation of~\cite{Sagnol09SOCP} for the standard
approximate $D$-optimal design problem (over $\mathcal{W}_\Delta$)
does not yield a valid SOCP formulation of the constrained $D$-optimal
design problem when
the constraint $\vec{w} \in\mathcal{W}_\Delta$ is replaced by
$\vec{w} \in\mathcal{W}$.

The main result of this paper is proved in Section~\ref{sectheo} and states
that the determinant criterion is \textit{SOC-representable}.
More precisely, it is possible to express that
$(t,\vec{w})$ belongs to the hypograph of $\vec{w}\to\Phi_D (M(\vec
{w}) )$,
that is, $t^{m}\leq\det M(\vec{w})$,
as a set of second-order cone inequalities.
Consequently, we obtain an alternative SOCP formulation for $D$-optimality,
which remains valid for any weight domain $\mathcal{W}$ that can be
expressed by SOC inequalities; see Section~\ref{secSOCr}.

In the  \hyperref[secothercrits]{Appendix}, we prove that other widely used
criteria, such as $A$, $G$ or $I$-optimality
are also SOC-representable.
We have summarized the 
SOCP formulations of constrained $D$-, $A$- and $G$-optimality in
Table~\ref{tabformulations}.

Before this paper, the state of the art method for solving optimal
design problems with arbitrary linear constraints
was the MAXDET formulation of Vandenberghe, Boyd and Wu \cite{VBW98},
which is in fact
reformulated as an SDP by most interfaces, such as YALMIP \cite{yalmip}
or PICOS \cite{Sagnol12picos},
by using the construction described in \cite{BTN87}.
Having an SOCP instead of an SDP formulation
has two main advantages. The first is purely computational: it is
well known that the computational effort per iteration
required by the interior point methods to solve an SOCP is much less
than that required to solve an SDP; cf. \cite{AG03}.
When the parameter $\vecc{\theta}$ is of large dimension $m$,
or when the number of candidate support points $s$ is large,
the SOCP can improve the computational time by one or two orders of
magnitude (compared to MAXDET),
as was already evidenced in \cite{Sagnol09SOCP} for $D$-optimality over
the probability simplex $\mathcal{W}_\Delta$.

\begin{table}
\caption{SOCP formulation of the $D_K$, $A_K$ and $G$-optimal design problems over a compact weight region $\mathcal{W}
\subseteq\R_+^s$.
In the above, $K$ represents a given
$m \times k$ matrix of full column rank. The particular case $k=1$
(where $\vec{c}=K$ is
a column vector) gives SOCP formulations for the $\vec{c}$-optimal
design problem, and the case $K=\vec{I}_m$ yields
the standard $D$ and $A$-optimality problems.
The variables $Z_i$, $Y_i$
($i\in[s]$) are of size $\ell_i \times k$,
the variables $H_i^j$ ($i\in[s], j\in[s]$) are of size $\ell_j \times
\ell_i$,
$J$ is of size $k\times k$, the weight vector is $\vec{w}\in\mathcal{W}$
and the variables $t_{ij}$ ($i\in[s], j\in[k]$), $u_i^j$ ($i\in[s], j\in
[s]$), $\mu_i$ ($i\in[s]$) and $\rho$ are scalar}
\label{tabformulations}
\begin{tabular*}{\tablewidth}{@{\extracolsep{\fill}}@{\hspace*{43pt}}l@{}}
\hline
$\displaystyle\max_{\vec{w} \in\mathcal{W}} \Phi_{D|K} \bigl(M(\vec{w}) \bigr)
=
\max_{\vec{w},Z_i,t_{ij},J}
\prod_{j=1}^k (J_{j,j})^{{1}/{k}}$\\[15pt]
\hspace*{50pt}\qquad\qquad$\qquad\displaystyle\mbox{s.t.} \qquad  \sum_{i\in[s]} A_i Z_i=KJ$,\\[14pt]
\hspace*{48pt}\qquad\qquad\qquad\phantom{s.t.} \qquad  $ J\mbox{ is lower triangular}$,\\[5pt]
\hspace*{48pt}\qquad\qquad\qquad\phantom{s.t.} \qquad$\displaystyle\Vert Z_i \vec{e}_j \Vert^2 \leq t_{ij} w_i\qquad \bigl( i\in[s], j\in[k]\bigr)$,\\[6pt]
\hspace*{49pt}\qquad\qquad\qquad\phantom{s.t.} \qquad$ \displaystyle\sum_{i=1}^s t_{ij} \leq J_{j,j}\qquad \bigl(j\in[k]\bigr)$,\\[14pt]
\hspace*{49pt}\qquad\qquad\qquad\phantom{s.t.} \qquad$\displaystyle t_{ij} \geq0 \qquad \bigl( i\in[s], j\in[k]\bigr)$,\\[7pt]
\hspace*{48pt}\qquad\qquad\qquad\phantom{s.t.} \qquad$\displaystyle\vec{w} \in\mathcal{W}$,
\\[12pt]
$\displaystyle\max_{\vec{w} \in\mathcal{W}} \Phi_{A|K} \bigl(M(\vec{w}) \bigr)
=
\max_{\vec{w},Y_i,\mu_i}
 \sum_{i\in[s]} \mu_i$\\[12pt]
\hspace*{50pt}\qquad\qquad$\displaystyle\qquad\mbox{s.t.} \qquad \sum_{i\in[s]} A_i Y_i=  \biggl(\sum_{i\in[s]} \mu
_i \biggr) K$, \\[16pt]
\hspace*{31pt}\qquad\qquad\qquad\phantom{s.t.} \qquad$\displaystyle\qquad \Vert Y_i \Vert_F^2 \leq\mu_i w_i \qquad \bigl( i\in[s] \bigr)$,\\[5pt]
\hspace*{32pt}\qquad\qquad\qquad\phantom{s.t.} \qquad$\displaystyle\qquad  \mu_i \geq0\qquad \bigl( i\in[s]\bigr)$, \\[5pt]
\hspace*{31pt}\qquad\qquad\qquad\phantom{s.t.} \qquad$\displaystyle\qquad  \vec{w} \in\mathcal{W}$,
\\[12pt]
$\displaystyle\max_{\vec{w} \in\mathcal{W}} \Phi_{G} \bigl(M(\vec{w}) \bigr) =
\max_{\vec{w},H_i^j,u_i^j,\rho} \rho$\\[14pt]
\hspace*{77pt}$\displaystyle\qquad\mbox{s.t.} \qquad \sum_{j\in[s]} A_j H_i^j=  \biggl(\sum_{j \in[s]}
u_i^j\biggr) A_i\qquad \bigl(i \in[s]\bigr)$, \\[16pt]
\hspace*{88pt}$\displaystyle\qquad\qquad \bigl\Vert H_i^j \bigr\Vert_F^2 \leq w_j u_i^j \qquad \bigl( i\in[s], j\in[s] \bigr)$,\\[4pt]
\qquad\hspace*{89pt}$\displaystyle\qquad  u_i^j \geq0\qquad \bigl( i\in[s], j\in[s]\bigr)$, \\[6pt]
\qquad\hspace*{89pt}$\displaystyle\qquad \rho\leq\sum_{j \in[s]} u_i^j\qquad  \bigl(i \in[s]\bigr)$,\\[14pt]
\qquad\hspace*{89pt}$\displaystyle\qquad  \vec{w} \in\mathcal{W}$. \\
\hline
\end{tabular*}
\end{table}

The second and probably more important benefit of SOCP formulations
(compared to SDP) is that specialized solvers
can handle SOCP problems with integer variables, while
there is currently no reliable solver to handle SDPs with integer variables.
Indeed, much progress has been made recently in the development of
algorithms for second-order cone programming, when some of the
variables are constrained in the integral domain
(MISOCP: mixed integer second-order cone programming).
Thus the SOCP formulation of $D$-optimality
presented in this article,
unlike the existing SOCP and SDP formulations,
allows us to use those specialized
codes to solve exact design problems. Indeed, our formulation is valid
for \textit{any compact weight domain} $\mathcal{W}$, so in
particular it is valid for the set $\mathcal{W}_N$ of exact designs of
size $N$, and more generally
for any polyhedron intersected with a lattice of integer points.
Compared to the raw branch-and-bound method for computing exact designs
proposed by Welch \cite{Wel82}, the MISOCP approach is not only easier to
implement, but also much more efficient.
The reason is that
specialized solvers such as CPLEX \cite{cplex} or MOSEK \cite{mosek}
rely on branch-and-cut algorithms with sophisticated branching heuristics,
and they use \textit{cut inequalities} to separate noninteger solutions.

In Section~\ref{secresults}, we demonstrate the general applicability
of the proposed approach,
incorporating illustrative examples taken from two
application areas of the theory of optimal experimental designs.
The following key aspects of the MISOCP approach will be emphasized:
\begin{longlist}[(3)]
\item[(1)] the ability to handle any system of linear constraints on
the weights;
\item[(2)] the ability to compute exact-optimal designs with a proof of
optimality;
\item[(3)] the ability to rapidly identify a near exact-optimal design
for applications where the computing time
must remain short, while giving a lower bound on its efficiency;
moreover this bound is usually much better
than the standard bound obtained from the approximate optimal design.
\end{longlist}
In particular, our algorithm can compute constrained exact optimal designs,
a feature out of reach of the standard computing methods, although some authors
have proposed heuristics to handle some special cases such as cost
constraints \cite{TV04,WSB10}.
A notable exception is the recent DQ-optimality approach of Harman and
Filov\'{a} \cite{HF13},
which is a heuristic based on integer quadratic programming (IQP) that
can handle
the general case of linearly constrained exact designs.
However, for some specific $D$-optimum design problems, the IQP
approach leads to very inefficient designs; cf. Section~4 in~\cite{HF13}.

In practice, the MISOCP solvers take an input tolerance parameter
$\varepsilon>0$,
and the computation stops when a design $\vec{w}^*$ is found, with a guarantee
that no design $\vec{w}$ with value $\Phi ( M(\vec{w}) )\geq
(1+\varepsilon)\Phi ( M(\vec{w}^*) )$ exists.
In some cases such as $D$-optimal block designs,
there is a positive value of $\varepsilon>0$
for which the returned design is verifiably optimal; see Section~\ref{secresults}.
Otherwise we can set $\varepsilon>0$ to a small constant
(i.e., a tolerance allowing a reasonable computation time),
so the design found with the MISOCP approach
will have an efficiency guarantee of $(1+\varepsilon)^{-1} \geq1-\varepsilon$,
which is usually a much better
efficiency bound than the one based on the comparison with the
approximate optimal design.
In many situations, the solver is further able to terminate with an
\textit{optimality status},
which means that the branch and bound tree has been completely trimmed
and constitutes a proof of optimality.
%
Moreover, it often produces better designs than the standard heuristics
(also in cases when perfect optimality is not guaranteed).


\section{Former SOCP formulation of $D$-optimality}\label{secformer}

A \textit{second-order cone program} (SOCP) is an optimization problem
where a linear function $\vec{f}^T \vec{x}$ must be maximized, among the
vectors $\vec{x}$ belonging to a set $S$-defined by second-order cone
inequalities, that is,
\[
S=\bigl\{\vec{x} \in\R^n: \forall i=1,\ldots,N_c, \Vert
G_i \vec{x} + \vec {h}_i\Vert\leq\vec{c}_i^T
\vec{x} + d_i\bigr\}
\]
for some $G_i,\vec{h}_i,\vec{c}_i,d_i$ of appropriate dimensions.
Optimization problems of this class can be solved efficiently to the
desired precision using interior point techniques; see~\cite{BV04}.

We first recall the result from~\cite{Sagnol09SOCP} about $D$-optimality,
rewritten with the notation of the present article. Note that
$\Vert Z \Vert_F:=\sqrt{\operatorname{trace} ZZ^T}$ denotes the
Frobenius norm of
the matrix $Z$, which also corresponds to the Euclidean norm of the
vectorization of $Z$:
$\Vert Z \Vert_F=\Vert\operatorname{vec}(Z) \Vert$. In the following
formulation,
the restriction to lower triangular matrices is just a compact notation
for the set of
linear constraints that appears in \cite{Sagnol09SOCP}:

\begin{proposition}[(Former SOCP for $D$-optimality \cite{Sagnol09SOCP})]
Let $(Z_1,\ldots, Z_s,\break  L,\vec{w})$ be optimal for the following SOCP:
\begin{eqnarray}
&& \mathop{\mathop{\max_{Z_i \in\R^{\ell_i \times m}}}_{ L \in\R^{m \times
m}}}_{\vec{w} \in\R_+^s}
\Biggl(\prod_{k=1}^m L_{k,k}
\Biggr)^{{1}/{m}}
\nonumber
\\
&&\qquad\mathrm{s.t.}\qquad \sum
_{i=1}^s A_i Z_i = L,
\nonumber
\\
\label{Dopt-oldSOCP} &&\qquad\phantom{s.t.}\qquad L \mbox{ is lower triangular},
\\
&&\qquad\phantom{s.t.}\qquad\Vert Z_i\Vert_F \leq
\sqrt{m} w_i \qquad \forall i \in [s],
\nonumber
\\
&&\qquad\phantom{s.t.}\qquad \vec{w} \in\Wdelta.
\nonumber
\end{eqnarray}

Then $\Phi_D (M(\vec{w}) )= \det^{1/m} M(\vec{w}) =  ( \prod_k L_{k,k}  )^{2/m}$,
and $\vec{w} \in\mathcal{W}_\Delta$ is optimal for the standard
approximate $D$-optimal design problem.
\end{proposition}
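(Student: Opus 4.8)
The plan is to prove the identity by a pair of inequalities: first, that \emph{every} feasible point of the SOCP satisfies $\big(\prod_k L_{k,k}\big)^2 \le \det M(\vec w)$, so the objective never exceeds $(\det M(\vec w))^{1/(2m)} \le \big(\max_{\vec w\in\Wdelta}\det M(\vec w)\big)^{1/(2m)}$; and second, that this bound is \emph{attained} by an explicit feasible point built from a $D$-optimal design. Together these identify the maximizer of the SOCP with a $D$-optimal $\vec w$ and yield the stated value, since squaring the optimal objective returns $\big(\prod_k L_{k,k}\big)^{2/m}=(\det M)^{1/m}=\Phi_D\big(M(\vec w)\big)$.

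For the upper bound, fix a feasible $(Z_1,\dots,Z_s,L,\vec w)$ and I would first derive the scalar inequality $\operatorname{trace}(B^T L)\le \sqrt m\,\big(\operatorname{trace}(B^T M(\vec w)B)\big)^{1/2}$, valid for every $B\in\R^{m\times m}$. This follows by writing $\operatorname{trace}(B^T L)=\sum_i\operatorname{trace}(B^T A_i Z_i)$, bounding each summand by $\Vert A_i^T B\Vert_F\,\Vert Z_i\Vert_F\le\sqrt m\,w_i\Vert A_i^T B\Vert_F$ via the Frobenius Cauchy--Schwarz inequality, and then applying a second, weighted Cauchy--Schwarz over $i$ using $\sum_i w_i=1$, which converts $\sum_i w_i\Vert A_i^T B\Vert_F$ into $\big(\sum_i w_i\operatorname{trace}(B^T A_iA_i^T B)\big)^{1/2}=\big(\operatorname{trace}(B^T M(\vec w)B)\big)^{1/2}$. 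The decisive step is then the substitution $B=M(\vec w)^{-1}L$, legitimate when $M(\vec w)$ is nonsingular; the singular case instead forces $\range L\subseteq\range M(\vec w)$ (the cone constraint kills every $Z_i$ with $w_i=0$), hence $\det L=0\le\det M(\vec w)$ trivially. With this choice both traces collapse to $\operatorname{trace}(L^T M(\vec w)^{-1}L)$, and the inequality reduces to $\operatorname{trace}\big(M(\vec w)^{-1}LL^T\big)\le m$. Diagonalising $M(\vec w)^{-1/2}LL^T M(\vec w)^{-1/2}$ and invoking the AM--GM inequality on its eigenvalues then gives $\det(LL^T)/\det M(\vec w)\le 1$, i.e. $\big(\prod_k L_{k,k}\big)^2=(\det L)^2\le\det M(\vec w)$, as required.

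For attainment I would take a $D$-optimal design $\vec w^\ast$, assumed to have nonsingular $M^\ast:=M(\vec w^\ast)$, let $L^\ast$ be the lower-triangular Cholesky factor with $L^\ast (L^\ast)^T=M^\ast$, and set $Z_i^\ast:=w_i^\ast A_i^T (L^\ast)^{-T}$. A direct computation gives $\sum_i A_iZ_i^\ast=M^\ast (L^\ast)^{-T}=L^\ast$, so the linear and triangularity constraints hold, while $\big(\prod_k L^\ast_{k,k}\big)^2=\det M^\ast$ meets the upper bound exactly. The main obstacle---and the only place where optimality of $\vec w^\ast$ is genuinely used---is verifying the cone constraints $\Vert Z_i^\ast\Vert_F\le\sqrt m\,w_i^\ast$. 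Here $\Vert Z_i^\ast\Vert_F^2=(w_i^\ast)^2\operatorname{trace}\big(A_i^T (M^\ast)^{-1}A_i\big)$, so these inequalities are equivalent to $\operatorname{trace}\big(A_i^T (M^\ast)^{-1}A_i\big)\le m$ for every $i$, which is precisely the Kiefer--Wolfowitz general equivalence theorem for $D$-optimality (the $w^\ast$-weighted average of these traces equals $\operatorname{trace}(I_m)=m$, and optimality upgrades this to the pointwise bound). Invoking that theorem closes the argument; I would therefore flag it as the single external ingredient, after which the proof is self-contained.
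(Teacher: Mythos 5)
Your proof is correct. Note that the paper itself does not prove this proposition --- it only recalls it from~\cite{Sagnol09SOCP} --- so the natural comparison is with the machinery the paper develops for its \emph{new} formulation (Lemma~4.1 and Theorems~4.2--4.3), which works via a QR/Cholesky decomposition of $H=[\sqrt{w_1}A_1,\dots,\sqrt{w_s}A_s]$, the Cauchy--Schwarz inequality for determinants and Hadamard's inequality, and deliberately avoids any appeal to optimality of $\vec{w}$. Your route is different and quite instructive: the upper bound $\bigl(\prod_k L_{k,k}\bigr)^2\le\det M(\vec{w})$ comes from a trace/Cauchy--Schwarz argument plus AM--GM on the eigenvalues of $M(\vec{w})^{-1/2}LL^TM(\vec{w})^{-1/2}$ (with the singular case correctly dispatched via $\range L\subseteq\range M(\vec{w})$), and attainment is built from the Cholesky factor of a $D$-optimal moment matrix, with feasibility of the cone constraints $\Vert Z_i^*\Vert_F\le\sqrt{m}\,w_i^*$ being exactly the Kiefer--Wolfowitz condition $\operatorname{trace}A_i^T(M^*)^{-1}A_i\le m$ (Theorem~\ref{theo:equiv}(iii)). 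What your argument makes transparent --- and what the paper only illustrates by a numerical counterexample in Section~\ref{sec:former} --- is precisely \emph{why} this SOCP is tied to $\Wdelta$: the normalization $\sum_i w_i=1$ is consumed in your weighted Cauchy--Schwarz step, and the equivalence theorem used for attainment is itself a statement about unconstrained optimization over the simplex; neither ingredient survives under general linear constraints, whereas the paper's QR-based argument does. The only hypotheses you use implicitly are the standard ones (the $A_i$ span $\R^m$, so a nonsingular $D$-optimal $M^*$ exists, and the objective is read as the geometric mean of nonnegative $L_{k,k}$), both of which are assumed throughout the paper; flagging the equivalence theorem as the single external ingredient is accurate and appropriate.
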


If we want to solve a $D$-optimal design problem over another design region
$\mathcal{W}$, it is very tempting to replace the last constraint in
problem~\eqref{Dopt-oldSOCP}
by $\vec{w}\in\mathcal{W}$. However, this approach fails.
Consider, for example, the following experimental\vspace*{1pt} design problem with
three regression vectors in a two-dimensional space:
$A_1=[1,0]^T$, $A_2=[-\frac{1}{2},\frac{\sqrt{3}}{2}]^T,
A_3=[-\frac{1}{2}, -\frac{\sqrt{3}}{2}]^T$.
For reasons of symmetry, it is clear that the approximate $D$-optimal
design (over $\mathcal{W}_\Delta$)
is $w_1=w_2=w_3=\frac{1}{3}$, and this is indeed the vector $\vec{w}$
returned by problem~\eqref{Dopt-oldSOCP}.
Define now\vspace*{1pt} $\mathcal{W}:=\{\vec{w} \in\R_+^3: \sum_{i=1}^3 w_i =1,
w_1 \geq w_2 + 0.25\}$.
The optimal design over $\mathcal{W}$ is $\vec{w}^*=[0.4583,
0.2083, 0.3333]$,
but solving problem~\eqref{Dopt-oldSOCP} with the additional constraint
$w_1 \geq w_2 + 0.25$ yields the
design $\vec{w}=[0.4482,0.1982,0.3536]$, which is suboptimal.


It can be proved that any optimal pair of variables $(\vec{w}^*,L^*)$
for problem~\eqref{Dopt-oldSOCP}
satisfies $M(\vec{w}^*)=(L^*)(L^*)^T$; that is, $L^*$ is a Cholesky
factor of the optimal information matrix.
However, this relation is only true for optimality over the unit
simplex $\mathcal{W}_\Delta$, which is a consequence of
a generalization of Elfving's theorem; cf.~\cite{Sagnol09SOCP}.
In the present article, we give an alternative SOCP formulation of the
$D$-optimal problem,
which remains valid for any compact
weight domain $\mathcal{W}$.
The main idea of our new formulation is that the Cholesky factorization
of a matrix $HH^T$ can be computed by solving an SOCP that mimics the
Gram--Schmidt orthogonalization process of the rows of $H$.
Moreover, our new SOCP handles the more general case of $D_K$-optimality.
To derive our result, we use the notion of \emph{SOC-representability}, which
we next present.

\section{SOC-representability} \label{secSOCr}

In this section, we briefly review some basic notions about
second-order cone representability. The following definition
was introduced by Ben-Tal and Nemirovski~\cite{BTN87}:

\begin{definition}[(SOC-representability of a set)]
A convex set $S \subseteq\R^n$ is said to be \textit{second-order cone
representable},
abbreviated \textit{SOC-representable}, if $S$ is the projection
of a set in a higher-dimensional space that can be described by
a set of second-order cone inequalities. More precisely, $S$ is
SOC-representable if and only if there exist
$G_i \in\R^{n_i \times(n+m)}, \vec{h}_i \in\R^{n_i},
\vec{c}_i \in\R^{n+m}, d_i \in\R$ ($i=1,\ldots,N_c$), such that
\[
\vec{x} \in S \quad\Longleftrightarrow\quad\exists\vec{y} \in\R^m:
\forall i=1,\ldots,N_c,\qquad \biggl\llVert G_i \lleft[
\matrix{ \vec{x}
\cr
\vec{y}} \rright] + \vec{h}_i \biggr\rrVert \leq
\vec{c}_i^T \lleft[ \matrix{ \vec{x}
\cr
\vec{y}}
\rright] +d_i.
\]
\end{definition}

An important example of an SOC-representable set is the
following:

\begin{lemma}[(Rotated second-order cone inequalities)]
\label{lemmarotated}
The set
\[
S=\bigl\{(\vec{x} , t, u)\in\R^n \times\R\times\R: \Vert\vec{x}
\Vert^2 \leq t u,  t\geq0, u\geq0\bigr\} \subseteq\R^{n+2}
\]
is SOC-representable. In fact, it is easy to see that
\[
S=\biggl\{(\vec{x},t,u)\in\R^n\times\R\times\R: \biggl\llVert %
\matrix{ 2\vec{x}
\cr
t-u } %
\biggr\rrVert \leq t+u\biggr\}.
\]
\end{lemma}

The notion of SOC-representability is also defined for functions:

\begin{definition}[(SOC-representability of a function)]
A convex (resp.,  concave) function $f: S \subseteq\R^n \mapsto\R
$ is said to be SOC-representable if and only if
the epigraph of $f$, $\{(t,\vec{x}): f(\vec{x}) \leq t\}$
[resp., the hypograph $\{(t,\vec{x}): t \leq f(\vec{x})\}$],
is SOC-representable.
\end{definition}

It follows immediately from these two definitions that
the problem of maximizing a concave SOC-representable function
(or minimizing a convex one)
over an SOC-representable set can be cast as an SOCP.
It is also easy to verify that sets defined by linear equalities (i.e.,
polyhedrons) are SOC-representable,
that intersections of SOC-representable sets are SOC-\break representable
and that the (pointwise) minimum of concave SOC-\break representable functions
is still concave and SOC-representable.

We next give another example
which is of major importance for this article:
the geometric mean of $n$ nonnegative variables is SOC-representable.

\begin{lemma}[(SOC-representability of a geometric mean~\cite{BTN87})]
\label{lemmaproduct}
Let $n\geq1$ be an integer.
The function $f$ mapping $\vec{x}\in\R_+^n$ to $\prod_{i=1}^n x_i^{1/n}$
is SOC-representable.
\end{lemma}

For construction of the SOC representation of $f$, see~\cite{LVBL98}
or~\cite{AG03}. In what follows, we show the case $n=5$. For all
$t\in\R_+$,
$\vec{x} \in\R_+^5$, we have
\begin{eqnarray*}
t^5 \leq x_1 x_2 x_3
x_4 x_5\quad & \Longleftrightarrow& \quad
t^8 \leq x_1 x_2 x_3
x_4 x_5 t^3
\\
& \Longleftrightarrow& \quad\exists\vec{u} \in\R_+^5: \cases{
u_1^2 \leq x_1 x_2,
& \quad $u_4^2 \leq u_1 u_2$,
\vspace*{3pt}
\cr
u_2^2 \leq x_3
x_4, &\quad $u_5^2 \leq u_3 t$,
\vspace*{3pt}
\cr
u_3^2 \leq x_5 t, & \quad
$t^2 \leq u_4 u_5$,}
\end{eqnarray*}
and each of these inequalities can be transformed to
a standard second-order cone inequality by Lemma~\ref{lemmarotated}.

\section{SOC-representability of the $D$-criterion} \label{sectheo}

The key to SOC representation of the $D$-criterion is a Cholesky
decomposition of the moment matrix,
as given by the following lemma. Note that the lemma is general in the
sense that it does not require the estimability
conditions to be satisfied.

\begin{lemma}\label{CholeskyInf}
Let $H$ be an $m \times n$ matrix ($m \leq n$), and let $K$ be an $m
\times k$ matrix ($k \leq m$) of full column rank.
If $k=m$, let $U=K$, and if $k<m$, let $U$ be a
nonsingular matrix of the form $[V,K]$, where $V\in\R^{m \times(m-k)}$.
Then there exists a QR-decomposition\vspace*{1pt} of $H^TU^{-T}=\tilde{Q}\tilde{R}$
where $\tilde{Q}$ is an orthogonal $n \times n$ matrix and $\tilde{R}$
is an upper triangular $n \times m$ matrix,
satisfying $\tilde{R}_{ii} \geq0$ for all $i \in[m]$ and
\begin{equation}
\label{zerorows} \tilde{R}_{ii}=0\qquad \mbox{implies }
\tilde{R}_{i1}=\cdots=\tilde {R}_{im}=0 \qquad \mbox{for all }
i \in[m].
\end{equation}
Let $L_*^T$ be the $k \times k$ upper triangular sub-block of $\tilde
{R}$ with elements
$(L^T_*)_{ij}=\tilde{R}_{m-k+i,m-k+j}$ for all $i,j \in[k]$. Then
$C_K(HH^T)=L_*L_*^T$; that is, $L_*L_*^T$ is a
Cholesky factorization of the information matrix for the linear
parametric system given by the coefficient matrix $K$, corresponding to
the moment
matrix $HH^T$.
\end{lemma}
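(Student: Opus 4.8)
The plan is to reduce the statement to a computation of Schur complements, with the QR factorization supplying a block-Cholesky factor of the transported moment matrix. Writing $M=HH^T$ and $N:=U^{-1}MU^{-T}$, the defining relation $H^TU^{-T}=\tilde Q\tilde R$ with $\tilde Q^T\tilde Q=\vec I_m$ gives at once $\tilde R^T\tilde R=U^{-1}HH^TU^{-T}=N$, so $\tilde R$ is an upper-triangular (right) Cholesky factor of $N$. First I would settle the existence of a QR factorization with $\tilde R_{ii}\ge 0$ and the property~\eqref{zerorows}: run Gram--Schmidt on the columns $b_1,\dots,b_m$ of $H^TU^{-T}$ from left to right; whenever the residual of $b_i$ against $\operatorname{span}(q_1,\dots,q_{i-1})$ is nonzero, normalise it to obtain $q_i$ and set $\tilde R_{ii}$ to its (nonnegative) norm, and whenever the residual vanishes set $\tilde R_{ii}=0$ and take $q_i$ to be a unit vector orthogonal to $q_1,\dots,q_{i-1}$ \emph{and} to the whole column space of $H^TU^{-T}$. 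Since $m\le n$, the orthogonal complement of that column space is large enough to supply all the vectors needed, and because such a $q_i$ is orthogonal to every column $b_j$ we get $\tilde R_{ij}=q_i^Tb_j=0$ for all $j$, i.e.\ the entire $i$-th row of $\tilde R$ vanishes; this is exactly~\eqref{zerorows}.

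Next I would identify $C_K(HH^T)$ with a (generalized) Schur complement of $N$. Partition $N$ conformally with $U=[V,K]$ into blocks $N_{11}$ (of size $(m-k)\times(m-k)$), $N_{12}$, $N_{21}=N_{12}^T$, and $N_{22}$ (of size $k\times k$). Every left inverse $L$ of $K$ can be written as $L=[G,\vec I_k]U^{-1}$ with $G\in\R^{k\times(m-k)}$ free (using $LK=\vec I_k$ to fix the last block and $G:=LV$ to name the first), whence $LML^T=[G,\vec I_k]\,N\,[G,\vec I_k]^T=GN_{11}G^T+GN_{12}+N_{21}G^T+N_{22}$. Writing $N=W^TW$ with $W=[W_1,W_2]$ turns this into the Gram matrix of $W_1G^T+W_2$, and a columnwise orthogonal-projection argument shows that the L\"owner minimum over $G$ always exists and equals $N_{22}-N_{21}N_{11}^-N_{12}$, the generalized Schur complement (this reproduces $(K^TM^-K)^{-1}$ in the estimable case). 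Hence $C_K(HH^T)=N_{22}-N_{21}N_{11}^-N_{12}$.

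Finally I would combine the two ingredients. The block-Cholesky identity $\tilde R^T\tilde R=N$ with $\tilde R=\left[\begin{smallmatrix}R_{11}&R_{12}\\0&R_{22}\end{smallmatrix}\right]$ gives $N_{11}=R_{11}^TR_{11}$, $N_{12}=R_{11}^TR_{12}$, $N_{21}=R_{12}^TR_{11}$ and $N_{22}=R_{12}^TR_{12}+R_{22}^TR_{22}$, where $R_{22}=L_*^T$ by definition. Substituting, the Schur complement becomes $R_{22}^TR_{22}+R_{12}^T(\vec I-\Pi)R_{12}$ with $\Pi=R_{11}(R_{11}^TR_{11})^-R_{11}^T$ the orthogonal projector onto $\operatorname{range}R_{11}$, so it suffices to prove $\operatorname{range}R_{12}\subseteq\operatorname{range}R_{11}$. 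This is precisely where~\eqref{zerorows} enters and is the main obstacle: in the non-estimable case $R_{11}$ is singular, and one must argue that every zero pivot of the triangular block $R_{11}$ annihilates the corresponding row of $R_{12}$ as well. Concretely, if $Z=\{i:\tilde R_{ii}=0,\ i\le m-k\}$, then~\eqref{zerorows} kills rows $Z$ of both $R_{11}$ and $R_{12}$; the complementary principal submatrix $R_{11}[S,S]$ on $S=[m-k]\setminus Z$ is upper triangular with nonzero diagonal, hence invertible, and solving $R_{11}[S,S]x_S=c_S$ for each column $c$ of $R_{12}$ (whose $Z$-entries already vanish) exhibits $c$ as $R_{11}\tilde x$. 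Thus $\operatorname{range}R_{12}\subseteq\operatorname{range}R_{11}$, so $\Pi R_{12}=R_{12}$ and the Schur complement collapses to $R_{22}^TR_{22}=L_*L_*^T$, giving $C_K(HH^T)=L_*L_*^T$. The cases $k=m$ (no $V$-block, $N_{11}$ empty, $C_K(M)=K^{-1}MK^{-T}$) and $R_{11}$ invertible are immediate specializations.
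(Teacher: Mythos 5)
Your proof is correct, and while it shares the same skeleton as the paper's argument (a QR factorization with property~\eqref{zerorows}, a projector representation of $C_K$, and the range inclusion supplied by the zero-row property), it follows a genuinely different and more self-contained route. The paper invokes Pukelsheim's formula $C_K(HH^T)=XHH^TX^T-XH[H^TY^T(YHH^TY^T)^-YH]H^TX^T$ for a specific left inverse $X$ read off from $U^{-1}$, and then works with the orthogonal projector onto $\range(H^TY^T)$ in $\R^n$, showing it fixes $Q_1B$ and annihilates $Q_*L_*^T$. You instead derive the needed representation from the L\"owner-minimum \emph{definition} of $C_K$: parametrizing left inverses as $L=[G,\vec I_k]U^{-1}$ and minimizing the Gram matrix columnwise gives $C_K(HH^T)=N_{22}-N_{21}N_{11}^-N_{12}$ with $N=U^{-1}HH^TU^{-T}$, after which everything reduces to the $m\times m$ block-Cholesky identity $\tilde R^T\tilde R=N$. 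Your treatment of the crucial range inclusion $\range R_{12}\subseteq\range R_{11}$ via the nonsingular principal submatrix $R_{11}[S,S]$ is exactly the point where the paper appeals to~\eqref{zerorows} as well (there phrased as $\range B\subseteq\range L_1^T$), but you spell out the argument. You also give a complete Gram--Schmidt construction of the QR factorization with~\eqref{zerorows} — including the dimension count showing a suitable $q_i$ orthogonal to the full column space always exists — where the paper only sketches this via Givens rotations. What your approach buys is independence from the cited representation of $C_K$ and a computation confined to small matrices; what the paper's buys is brevity, at the cost of leaning on an external formula and on $n$-dimensional projector manipulations.
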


\begin{pf}
It is simple to show that a QR decomposition satisfying \eqref
{zerorows} can be obtained from any
QR-decomposition $H^TU^{-T}=\bar{Q}\bar{R}$, using an appropriate
sequence of Givens rotations\vspace*{1pt} and row permutations applied on $\bar{R}$.

Consider the decomposition $H^TU^{-T}=\tilde{Q}\tilde{R}$ satisfying
\eqref{zerorows}.
Assume that $k<m<n$. Partition the orthogonal matrix $\tilde{Q}$ and
the upper
triangular matrix $\tilde{R}$ as follows:
\begin{equation}
\label{blocks} 
\tilde{Q}= %
\begin{array} {c@{\quad}c@{
\quad}c@{\quad}l} \mbox{\scriptsize$\displaystyle\mathop{\longleftrightarrow}^{m-k}$}
\!\! & \mbox{\scriptsize$\displaystyle\mathop{\longleftrightarrow}^{k}$}
& \!\!\mbox{\scriptsize$\displaystyle\mathop{\longleftrightarrow}^{n-m}$}
&
\cr
[Q_1 & Q_* & Q_2] & \mbox{
\scriptsize$\updownarrow n$}, \end{array} %
\qquad 
\tilde{R}= \begin{tabular} {c} \hspace*{-39pt}$\matrix{\mbox{\scriptsize$
\displaystyle\mathop{\longleftrightarrow}^{m-k}$} & \!\mbox{\scriptsize$
\displaystyle\mathop{\longleftrightarrow}^{k}$} }$ \vspace*{2pt}
\\
$\lleft[\matrix{ L_1^T & B
\cr
0 &
L_*^T
\cr
0 & 0 } \rright] \hspace*{6pt}\matrix{\mbox{
\scriptsize$\hspace*{-3pt}\updownarrow m-k$}
\cr
\hspace*{-16pt}\mbox{\scriptsize$\updownarrow
k$}
\cr
\hspace*{0pt}\mbox{\scriptsize$\updownarrow n-m$}}$ \end{tabular}
\end{equation}
where the block sizes are indicated on the border of the matrices.
Let $U^{-1}=[Z^T, X^T]^T$, where $X$ is a $k \times m$ matrix. Note
that $[Z^T,X^T]^TK=U^{-1}K=[0,\vec{I}_k]^T$, which implies $XK=\vec
{I}_k$; that is,
$X$ is a left inverse of $K$.
Define $Y=\vec{I}_m-KX$. By a direct calculation, we obtain
$XH=B^TQ_1^T+L_*Q_*^T$ and $YH=H-KXH=[V,K]\tilde{R}^T\tilde
{Q}^T-K(B^TQ_1^T+L_*Q_*^T)=VL_1Q_1^T$.
Therefore, using the orthogonality of $\tilde{Q}$, that is,
$Q_1^TQ_1=\vec{I}_{m-k}$, $Q_*^TQ_*=\vec{I}_k$, $Q_1^T Q_* = 0$
and a representation of $C_K$ given by \cite{Puk93}, Section~3.2,
we have
\begin{eqnarray}
C_K\bigl(HH^T\bigr)&=&XHH^TX^T-XHH^TY^T
\bigl(YHH^TY^T\bigr)^-YHH^TX^T
\nonumber
\\[-9pt]
\label{Pukinf}
\\[-9pt]
\nonumber
&=&B^TB+L_*L_*^T - B^T
\underbrace{L_1^T V^T \bigl(VL_1L_1^T
V^T\bigr)^- VL_1}_{P} B, 
\end{eqnarray}
where $P$ is the orthogonal projector on $\range(L_1^TV^T)$.
Note that \eqref{zerorows} implies $\range(B) \subseteq\range
(L_1^T)$, and $\operatorname{rank}(V)=m-k$ gives $\range(L_1^T)=\range
(L_1^TV^T)$.
That is, $PB=B$, and from \eqref{Pukinf} we obtain the required result
$C_K(HH^T)=L_*L_*^T$.

If $k=m$ or $n=m$, the lemma can be proved in a completely analogous
way, treating the matrices $Q_1,L_1,B$ (if and only if $k=m$) and $Q_2$
(if and only if $m=n$) as \textit{empty}.\vspace*{-3pt}
\end{pf}

The next theorem shows that the blocks $Q_*$ and $L_*$ from
decomposition~\eqref{blocks} can be computed
by solving an optimization problem over an SOC-representable set.\vspace*{-3pt}

\begin{theorem} \label{theoKdet}
Let $H$ be an $m \times n$ matrix ($m \leq n$),
let $K$ be an $m \times k$ matrix ($k \leq m$) of full column rank and
let $L_*$ be optimal for the following problem:
\begin{eqnarray}
&&\mathop{\max_{Q \in\mathbb{R}^{n \times k}}}_{L \in\mathbb{R}^{k \times k}} \det L
\nonumber\\[-2pt]
&&\qquad\mathrm{s.t.} \qquad L \mbox{ is lower triangular},
\nonumber
\\[-9pt]
\label{probKdet}
\\[-9pt]
\nonumber
&&\hspace*{-2pt}\qquad\phantom{s.t.}\qquad HQ = KL,
\\[-2pt]
&&\hspace*{-2pt}\qquad\phantom{s.t.}\qquad\Vert Q\vec{e}_j \Vert\leq 1 \qquad \bigl(j\in[k]\bigr).
\nonumber
\end{eqnarray}

Then\vadjust{\goodbreak} $\Phi_{D|K}(HH^T)=(\det(L_*))^{2/k}$.
\end{theorem}
\begin{pf}
Consider the QR decomposition $H^TU^{-T}=\tilde{Q}\tilde{R}$ from the
statement of Lemma~\ref{CholeskyInf},
and the block partition~\eqref{blocks}. We will show that the blocks
$Q_*$ and $L_*$ form an optimal solution to the problem from the theorem.

First,\vspace*{1pt} $L_*$ is clearly lower triangular, and using direct block
multiplication together with $Q_*^TQ_*=\vec{I}_k$, we can verify that
$Q_*^TH^T=L_*^TK^T$, that is,
$HQ_*=KL_*$. Second, $Q_*$ has columns of unit length, which implies $\|
Q_*\vec{e}_j\| = 1$ for all $j\in[k]$.
Therefore, $Q_*,L_*$ are feasible. From Lemma~\ref{CholeskyInf}, we
know that $C_K(HH^T)=L_*L_*^T$, that is,
$(\det(L_*))^{2/k}=\Phi_{D|K}(HH^T)$. To\vspace*{1pt} complete the proof of the
theorem, we only need to show that any
feasible $L$ satisfies $(\det(L))^{2/k} \leq\Phi_{D|K}(HH^T)$.

Let $Q,L$ be a feasible pair of matrices. As in the proof of Lemma~\ref
{CholeskyInf}, let $U=[V,K]$ be an invertible matrix, and let
$U^{-1}=[Z^T,X^T]^T$,
where $X$ is a $k \times m$ matrix. Obviously, $U^{-1}H=[C^T,D^T]^T$,
where $C=ZH$ and $D=XH$, and $[C^T,D^T]^TQ=U^{-1}HQ=U^{-1}KL=[0,\vec
{I}_k]^TL=[0,L^T]^T$,
which implies $CQ=0$ and $DQ=L$. Define the projector $P=\vec
{I}_n-C^T(CC^T)^-C$, that is, $P^2=P$, and then observe that $CQ=0$
entails $PQ=Q$.
From the previous equalities and the Cauchy--Schwarz inequality for
determinants [e.g., \cite{Seb08}, formula~12.5(c)], we have
\begin{equation}
\label{Eq1}\hspace*{9pt} \det\bigl(LL^T\bigr)=\bigl(\det(DQ)
\bigr)^2=\bigl(\det(DPQ)\bigr)^2 \leq\det
\bigl(DPD^T\bigr)\det\bigl(Q^TQ\bigr).
\end{equation}
The Hadamard determinant inequality (e.g., \cite{Seb08}, formula
12.27) and the feasibility of $Q$ give
\begin{equation}
\label{Eq2} \det\bigl(Q^TQ\bigr) \leq\prod
_{i=1}^k \bigl(Q^TQ\bigr)_{ii}
= \prod_{i=1}^k \|Q\vec {e}_i
\|^2 \leq1.
\end{equation}
Combining\vspace*{1pt} \eqref{Eq1} and \eqref{Eq2}, we obtain $\det(LL^T) \leq\det
(DPD^T)$, and the proof will be complete,\vspace*{1pt} once we prove $DPD^T=C_K(HH^T)$.

Note that\vspace*{1pt} $\vec{I}_m=UU^{-1}=[V,K][Z^T,X^T]^T=VZ+KX$, that is, $Y:= \vec
{I}_m-KX = VZ$.
Moreover, $\operatorname{rank}(V)=m-k$ implies $\operatorname
{range}(H^TY^T)=\operatorname{range}(H^T\times\break Z^T V^T)=\operatorname{range}(H^TZ^T)$;
that is,\vspace*{1pt} the orthogonal projectors $H^TY^T(YHH^T\times\break Y^T)^-HY$ and
$H^TZ^T(ZHH^TZ^T)^-HZ$ coincide. Consequently, using \cite{Puk93},
Section~3.2, we have
\begin{eqnarray*}
C_K\bigl(HH^T\bigr)&=&XHH^TX^T-XHH^TY^T
\bigl(YHH^TY^T\bigr)^-YHH^TX^T
\\
&=&XHH^TX^T-XHH^TZ^T
\bigl(ZHH^TZ^T\bigr)^-ZHH^TZ^T
\\
&=&DD^T-DC^T\bigl(CC^T\bigr)^-CD^T=DPD^T.
\end{eqnarray*}
\upqed\end{pf}

We next apply Theorem~\ref{theoKdet} to the matrix
$H=[\sqrt{w_1} A_1,\ldots,\sqrt{w_s} A_s]$.
This will allow us to express
$\Phi_{D|K} (M(\vec{w}) )$
as the optimal value of an SOCP.
Moreover, we make a change of variables which transforms the
optimization problem into an SOCP where $\vec{w}$
may play the role of a variable.

\begin{theorem} \label{theodetsocr}
Let $K$ be an $m \times k$ matrix $(k\leq m)$
of full column rank.
For all nonnegative weight vectors $\vec{w} \in\R_+^s$,
denote by $\mathit{OPT}(\vec{w})$ the optimal value
of the following optimization problem, where
the optimization variables are $t_{ij} \in\R_+$ ($\forall i\in
[s],\forall j\in[k]$),
$Z_i \in\R^{\ell_i \times k}$ $(\forall i\in[s])$
and $J\in\R^{k\times k}$:
%
\renewcommand{\theequation}{4.7\alph{equation}}
\setcounter{equation}{0}
\begin{eqnarray}
\label{socpDopt}\label{socpdetobj} && \max_{Z_i,t_{ij},J} \Biggl(\prod
_{j=1}^k J_{j,j}
\Biggr)^{1/k}
\\
\label{socpdetazl} &&\qquad\mbox{s.t.}\qquad \sum_{i=1}^s
A_i Z_i=KJ,
\\
\label{socpdetlowtri} &&\qquad\phantom{s.t.}\qquad J \mbox{ is lower triangular},
\\
\label{socpdetnorm} &&\qquad\phantom{s.t.}\qquad \Vert Z_i
\vec{e}_j \Vert^2 \leq t_{ij} w_i
\qquad \bigl( i\in[s], j\in[k]\bigr),
\\
\label{socpdetsum1} &&\qquad\phantom{s.t.}\qquad \sum_{i=1}^s
t_{ij} \leq J_{j,j}\qquad \bigl(j\in[k]\bigr).
\end{eqnarray}
%
Then we have
\[
\mathit{OPT}(\vec{w}) = \Phi_{D|K} \bigl(M(\vec{w}) \bigr).
\]
%
\end{theorem}
\begin{pf}
Let $\vec{w}\in\R_+^s$,
and define $H:=[\sqrt{w_1} A_1,\ldots,\sqrt{w_s} A_s]$.
We are going to show that every feasible solution to problem
\eqref{socpDopt}--\eqref{socpdetsum1}
yields a feasible solution for problem~\eqref{probKdet}
in which\vspace*{1pt} $J_{j,j} = L_{j,j}^2$ for all $j\in[k]$,
and vice versa.
Hence the optimal value of problem~\eqref{socpDopt}--\eqref{socpdetsum1} is
\begin{eqnarray*}
\mathit{OPT}(\vec{w}) &=& (\det J)^{1/k}= (\det L)^{2/k}=
\Phi_{D|K}\bigl(HH^T\bigr)= \Phi _{D|K} \bigl( M(
\vec{w}) \bigr), 
\end{eqnarray*}
from which the conclusion follows.

Consider a feasible solution $(Z_i,t_{ij},J)$ to problem~\eqref{socpDopt}--\eqref{socpdetsum1}.
We denote by $\vec{z}_{ij}$
the $j$th column of $Z_i$: $\vec{z}_{ij}:=Z_i\vec{e}_j$.
We now make the following
change of variables: denote by $Q_i$ the matrix whose
$j$th column is $\vec{q}_{ij}$,
where
\[
\vec{q}_{ij}=\cases{ %
\displaystyle\frac{\vec{z}_{ij}}{\sqrt{w_i}\sqrt{J_{j,j}}}, &\quad$\mbox{if }
w_i>0 \mbox{ and } J_{j,j}>0$; \vspace*{4pt}
\cr
\vec{0}, &$
\quad\mbox{otherwise}$,}
\]
and define $Q$ as the vertical concatenation of the $Q_i$: $Q =
[Q_1^T,\ldots,Q_s^T]^T$.
Let $j\in[k]$. If $J_{j,j}=0$, then $\vec{q}_{ij}=\vec{0}$ for all $i$, so
$\Vert Q\vec{e}_j \Vert^2= \sum_i \Vert\vec{q}_{ij} \Vert^2 = 0 \leq
1$. Otherwise ($J_{j,j}>0$),
constraint~\eqref{socpdetnorm} together with the nonnegativity of
$t_{ij}$ implies $\Vert\vec{q}_{ij} \Vert^2 \leq\frac{t_{ij}}{J_{j,j}}$,
and by constraint~\eqref{socpdetsum1}, we must have
\[
\Vert Q\vec{e}_j \Vert^2 = \sum
_i \Vert\vec{q}_{ij} \Vert^2 \leq\sum
_i \frac{t_{ij}}{J_{j,j}} \leq1.
\]

Observe that constraints~\eqref{socpdetnorm} and~\eqref{socpdetsum1}
also imply that $\vec{z}_{ij}=\vec{0}$ whenever
$w_i=0$ or $J_{j,j}=0$, so that for all $i\in[s]$, $j\in[k]$, we can
write $\vec{z}_{ij} = \sqrt{w_i} \sqrt{J_{j,j}} \vec{q}_{ij}$.
Now, we define the matrix $L$ column-wise as follows:
\[
\forall j\in[k],\qquad L\vec{e}_j:= \cases{ %
\displaystyle\frac{J \vec{e}_j}{\sqrt{J_{j,j}}}, &\quad$\mbox{if } J_{j,j}>0$; \vspace*{4pt}
\cr
\vec{0},
&$\quad\mbox{otherwise}$.}
\]
Note that $L$ is lower triangular [because so is $J$; see~\eqref
{socpdetlowtri}].
We can now prove that $HQ=KL$, 
which we do column-wise.\vspace*{1pt}
If $J_{j,j}=0$, then we know that $Q\vec{e}_j=\vec{0}$, so the $j$th
columns of $HQ$ and $KL$ are zero.
If $J_{j,j}>0$, then using~\eqref{socpdetazl} we have
\[
KL\vec{e}_j = \frac{KJ \vec{e}_j}{\sqrt{J_{j,j}}} = \frac{\sum_i A_i \vec{z}_{ij}}{\sqrt{J_{j,j}}} = \sum
_i \sqrt{w_i} A_i
\vec{q}_{ij} =HQ\vec{e}_j.
\]
Hence the proposed change of variables transforms
a feasible solution $(Z,t_{ij},J)$ to
problem~\eqref{socpDopt}--\eqref{socpdetsum1} into a feasible pair $(Q,L)$
for problem~\eqref{probKdet}, with the property
$J_{j,j} = L_{j,j}^2$ for all\vspace*{1pt} $j\in[k]$.

Conversely, let $(Q,L)$ be feasible for problem~\eqref{probKdet},
where $H$ has been set to $[\sqrt{w_1} A_1,\ldots,\sqrt{w_s} A_s]$.
For $i\in[s]$, define $Z_i$ as the matrix of size $\ell_i \times k$
whose $j$th column
is $\vec{z}_{ij} = \sqrt{w_i} L_{j,j} \vec{q}_{ij}$,
and $J$\vspace*{1pt} as the lower triangular matrix whose $j$th column is $J\vec
{e}_j=L_{j,j} L\vec{e}_j$.
We have $\sum_i A_i Z_i=KJ$, which can be verified column-wise as follows:
\[
KJ\vec{e}_j = L_{j,j} KL\vec{e}_j =
L_{j,j} HQ\vec{e}_j = L_{j,j} \sum
_i \sqrt{w_i} A_i
\vec{q}_{ij} = \sum_i A_i
\vec{z}_{ij} = \sum_i A_i
Z_i \vec{e}_j.
\]
Define further $t_{ij} = L_{j,j}^2 \Vert\vec{q}_{ij} \Vert^2$,
so that constraints~\eqref{socpdetnorm} and~\eqref{socpdetsum1} hold.
This shows that $(Z_i,t_{ij},J)$ is feasible, with $J_{j,j} =
L_{j,j}^2$ for all $j\in[k]$, and the proof is complete.
\end{pf}

\begin{corollary}[(SOC-representability of $\Phi_{D|K}$)]\label
{cororepresentable}
For any $m \times k$ matrix $K$ of rank $k$, the function
\mbox{$\vec{w} \to\Phi_{D|K} (M(\vec{w}) )$} is SOC-representable.
\end{corollary}
\begin{pf}
Problem~\eqref{socpDopt}--\eqref{socpdetsum1} can be reformulated as an SOCP,
because by Lemmas~\ref{lemmaproduct}
and~\ref{lemmarotated}
the geometric mean in~\eqref{socpdetobj}
and inequalities of the form $\Vert Z_i \vec{e}_j \Vert^2 \leq t_{ij} w_i$
are SOC-representable.
Hence the optimal value of~\eqref{socpDopt}--\eqref{socpdetsum1},
$\vec{w} \to \mathit{OPT}(\vec{w})$, is SOC-representable, and we know
from Theorem~\ref{theodetsocr} that $\mathit{OPT}(\vec{w}) = \Phi_{D|K}
(M(\vec{w}) )$.
\end{pf}

\begin{corollary}[{[(MI)SOCP formulation of the $D$-optimal design
problem]}]\label{coroMISOCP}
If the set $\mathcal{W}$ is SOC-representable (in particular, if
$\mathcal{W}$ is defined by a set of linear inequalities),
then the constrained $D_K$-optimal design problem~\eqref
{theBasicProblem} can be cast as an SOCP.
If $\mathcal{W}$ is the intersection of an SOC-representable
set with the integer lattice $\mathbb{Z}^s$, then the \textit{exact}
$D_K$-optimal design problem over $\mathcal{W}$ can be cast as an MISOCP.
\end{corollary}


For $K=\vec{I}_m$, Corollaries~\ref{cororepresentable} and~\ref
{coroMISOCP} cover the case of the standard $D$-optimality.
The (MI)SOCP formulation of problem~\eqref{theBasicProblem} for
$D_K$-optimality ($\Phi= \Phi_{D|K}$)
is summarized in Table~\ref{tabformulations}, together with
formulations for the other criteria presented in the \hyperref[secothercrits]{Appendix}.
Finally, we note that the SOCP formulation of the optimal design
problem with constraints on the weights
has consequences in terms of complexity, which we next present.


\subsection*{Complexity of computing constrained approximate
$D_K$-optimal designs}
Recall that $s$ denotes the number of candidate support points, and
$k\leq m$ denotes the number of features
that we wish to estimate. (The full rank coefficient matrix $K$ is in
$\mathbb{R}^{m \times k}$.)
Assume for simplicity that $\ell_i=\ell$ for all $i \in[s]$,
that the set of design weights $\mathcal{W}$ is defined by a set of $n$
inequalities
and that $k$ is a power of $2$, so that the geometric mean can be
represented by $k$ inequalities
and $k$ auxiliary variables;
cf. Lemma~\ref{lemmaproduct} or \cite{Sagnol2013LAA} for more details.
Then the SOCP formulation for $D_K$-optimality of Table~\ref{tabformulations} contains:
\begin{itemize}
\item $s+s\ell k+ s k + \frac{1}{2} k(k+1) + k$ variables,
\item $mk + k + n$ linear (in)equalities,
\item $k$ SOC inequalities of size $2$ and $ks$ SOC inequalities of
size $\ell+1$.
\end{itemize}
The number of iterations required by the interior point methods (IPM)
to compute an $\varepsilon$-approximate solution
depends only on the number $q$ of second-order cones. Indeed it is
shown in~\cite{BTN87}
that the IPM finds an $\varepsilon$-approximate\vspace*{1.5pt} solution after at most
$\sqrt{q}O(\log\frac{1}{\varepsilon})$ iterations, which is
$\sqrt{k(s+1)}O(\log\frac{1}{\varepsilon})$ iterations in our setting.
However, it is well known that this bound is overconservative, and in
practice the IPM always returns
an excellent solution after 10 to 40 iterations, almost independently
of the problem size.
In other words, the critical point is the algorithmic complexity of one
iteration.
Again, a result of~\cite{BTN87} (Section~4.6.2)
allows us to bound the number of algorithmic operations for one
iteration in
$O (ks\ell ((ks\ell)^2 + (mk+n)^2 )  )$,
which is $O((ks\ell)^3)$ if $m$ and $n$ are not too large. But it is
well known that this bound is very conservative, too.
In fact, the bottleneck of one iteration is the resolution of a linear
system of the form
$B\vecc{\delta}=\vecc{\beta}$, where $B$ is a $O(ks\ell) \times O(ks\ell
)$ symmetric positive semidefinite matrix.
In practice, for SOCPs the matrix $B$ has a ``\textit{diagonal} $+$ \textit{sparse
low rank}'' structure,
which allows for an efficient computation of the Newton direction
$\vecc{\delta}$ \cite{AG03}.

\section{Examples} \label{secresults}

In this section, we will present numerical results for several examples
taken from various application areas of the theory of optimal designs.
With these examples, we aim to demonstrate the general applicability of
the (MI)SOCP technique
for the computation of exact or approximate $D$-optimal designs.

Our computations were conducted on a PC with a 4-core processor at 3~GHz.
We used MOSEK~\cite{mosek} to solve the approximate optimal design problems
and CPLEX~\cite{cplex} for the exact optimal design problems (with
integer constraints). The solvers were interfaced
through the Python package PICOS~\cite{Sagnol12picos}, which allows
users to
pass (MI)SOCP models 
to different solvers in a simple fashion. We refer the reader to the
example section of the
PICOS documentation for a practical implementation of the (MI)SOCP
approach for optimal design problems.

It is common to compare several designs against each other by using the
metric of $D$-efficiency, which is defined as
\[
\mathrm{eff}_D(\vec{w}) = \frac{\Phi_D (M(\vec{w}) )}{\Phi_D (M(\vec{w}^*) )} = \biggl(
\frac{\det M(\vec{w})}{\det M(\vec{w}^*)} \biggr)^{1/m},
\]
where $\vec{w}^*$ is a reference design, such that $M(\vec{w}^*)$ is
nonsingular. Unless stated otherwise, we always give $D$-efficiencies
relative to the optimal design; that is, $\vec{w}^*$~is a solution to
problem~\eqref{theBasicProblem}.

\subsection*{Block designs with blocks of size two}
An important category of models studied in the experimental design
literature is the class of \textit{block designs}. Here the effect of
$t$ treatments should be compared, but their effects can only be measured
inside a number $b$ of \textit{blocks}, each inducing a block effect on
the measurements.
The optimal design problem entails choosing which treatments should be
tested together in each block.
We refer the reader to Bailey and Cameron~\cite{BC09} for a
comprehensive review on the combinatorics of
block designs.

In the case where the blocks are of size two, that is, the treatments
can be tested pairwise against each other,
a design can be represented by a vector $\vec{w} =
[w_{1,2},w_{1,3},\ldots,w_{1,t},\ldots,w_{t-1,t}]$
of size $s={t \choose 2}$. For $i<j$, $w_{i,j}$ indicates the number of
blocks where treatments $i$ and $j$
are tested simultaneously. The observation matrix associated with the
block $(i,j)$ can be chosen as the column vector
of dimension $m=(t-1)$,
\begin{equation}
\label{obssparse} A_{i,j} = P (\vec{e}_i -
\vec{e}_j),
\end{equation}
where $\vec{e}_i$ denotes the $i$th unit vector in the canonical basis
of $\R^t$ and $P$ is the
matrix 
that transforms a $t$-dimensional vector $\vec{v}$ to the vector
obtained by keeping the first $(t-1)$ coordinates
of $\vec{v}$.

The problem of $D$-optimality has a nice graph theoretic interpretation:
let $\vec{w}\in\mathbb{N}_0^s$ be a feasible block design, and denote
by $G$
the graph with $t$ vertices and an edge of multiplicity $w_{i,j}$ for
every pair of
nodes $(i,j)$. (If $w_{i,j} = 0$, then there is no edge from $i$ to $j$.)
This graph is called the \textit{concurrence graph} of the design.
We have $M(\vec{w})=PL(\vec{w})P^T$,
where\vspace*{1pt} $L(\vec{w}):=\sum_{i,j} w_{i,j} (\vec{e}_i -\vec{e}_j) (\vec{e}_i
-\vec{e}_j)^T \in\R^{t\times t}$ is the Laplacian of $G$.
In other words, $M(\vec{w})$
is the submatrix of the Laplacian of $G$
obtained by removing its last row and last column.
So by Kirchhoff's theorem the determinant of
$M(\vec{w})$ is the number of spanning trees of $G$. In other words, the
exact $D$-optimal designs of size $N$ correspond to the graphs with $t$
nodes and $N$ edges
that have a maximum number of spanning trees.

\begin{remark} \label{altparam}
There is an alternative parametrization of block designs with blocks of
size two; see~\cite{HF13}. Define the observation matrices by
\begin{equation}
\label{obsproj} A'_{i,j} = U^T (
\vec{e}_i - \vec{e}_j),
\end{equation}
where the columns of $U\in\R^{t \times(t-1)}$ form an orthonormal
basis of $\Ker\vec{1}$ ($\vec{1}$ is the vector with all components
equal to $1$);
that is, the $t\times t$-matrix $[U,\frac{1}{\sqrt{t}}\mathbf{1}]$ is
orthogonal.
It can be seen that the $t-1$ eigenvalues of $M'(\vec{w}) = \sum_{i,j} w_{i,j} A_{i,j}' A_{i,j}'^T = U^TL(\vec{w}) U$ coincide with the
$t-1$ largest
eigenvalues of $L(\vec{w})$, and the smallest eigenvalue of $L(\vec
{w})$ is $0$. So the set of $D$-optimal designs
for observation models~\eqref{obssparse} and~\eqref{obsproj}
coincide. In our experiments,
we have used the former model~\eqref{obssparse} because it involves
sparse information matrices and yields
more efficient computations. However, note that for some other criteria
depending on the eigenvalues of the information matrix,
the model given by~\eqref{obsproj} should be used.
\end{remark}

To illustrate the new capability of the MISOCP approach, we computed designs
of $N=15$ blocks on $t=10$ treatments by imposing different types of constraints
on the replication numbers
(i.e., the
numbers of times that each treatment is tested).
Such constraints can be easily expressed by linear (in)equalities. For example,
a design $\vec{w}$ has treatment~$j$ replicated $r_j$ times if and only if
\[
\sum_{i=1}^{j-1} w_{i,j} + \sum
_{i=j+1}^{t} w_{j,i} =
r_j.
\]
The concurrence graphs of these constrained optimal designs are displayed
in Figure~\ref{figconcgraphs}.
Note that these constrained exact optimal designs cannot be computed by
any of the standard methods.

\begin{figure}
\begin{tabular}{@{}c@{}}

\includegraphics{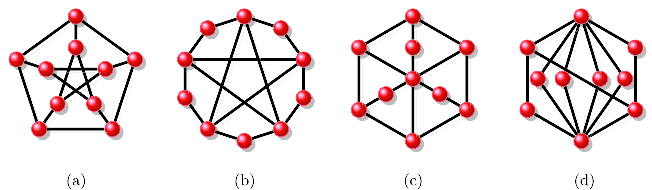}

\\[6pt]
\footnotesize{\begin{tabular*}{\tablewidth}{@{\extracolsep{\fill}}@{}lcccc@{}}
\hline
\textbf{Design}&\textbf{(a)}&\textbf{(b)}&\textbf{(c)}&\textbf{(d)}\\
\hline
CPU (s) & \phantom{00}9.07\phantom{\%} & \phantom{0}4.9\phantom{0\%} & 13.8\phantom{0\%} & \phantom{00}5.7\phantom{0\%}\\
Lower bound on $\mathrm{eff}_D$ (initial) & \phantom{0}90.15\% & 92.56\% &
91.36\% & \phantom{0}91.27\%\\
Lower bound on $\mathrm{eff}_D$ (10 min) & 100.0\%\phantom{0} & 96.56\% &
98.04\% & 100.0\%\phantom{0}\\
\hline
\end{tabular*}}
\end{tabular}
\caption{Concurrence graphs of the $D$-optimal designs of $N=15$ blocks
on $t=10$ treatments,
among the class of $2$-block designs that \textup{(a)} are equireplicate;
\textup{(b)} have half of the treatments replicated $2$ times, and the other
half replicated $4$ times;
\textup{(c)} have one treatment replicated at least $6$ times;
\textup{(d)} have two treatments replicated at least $6$ times.
For each case, the table gives the time required by the MISOCP solver
to find the optimal design; the (initial) lower bound on the
$D$-efficiency of the optimal design, compared to the constrained
approximate design;
the lower bound on the
$D$-efficiency of the optimal design
that is guaranteed after 10~min of computing time.}\label{figconcgraphs}
\end{figure}

Mixed integer optimization solvers rely on sophisticated branch-and-cut
algorithms. After each
iteration, the value $L=\Phi_D(M(\hat{\vec{w}}))$ of the best solution
$\hat{\vec{w}}$ found so far is compared to an upper bound $U$ provided
by a series of continuous relaxation of the problem, and the \emph{gap}
defined by $\delta=\frac{U-L}{L}$ is displayed.
Note that $\delta$ can directly be interpreted as a guarantee on the
$D$-efficiency of $\hat{\vec{w}}$, namely
$\operatorname{eff}_D(\hat{\vec{w}}) \geq(1+\delta)^{-1}$.
The following remark shows that for block designs,
the current best solution is actually proved to be exact $D$-optimal
as soon as the gap $\delta$ reaches a small tolerance parameter
$\varepsilon>0$.

\begin{remark} \label{remopt}
Let $T_{\vec{w}}$ denote the number of spanning trees
of the concurrence graph $G$ corresponding to an exact design $\vec{w}$,
and $T^*$ denote the maximal number of spanning trees for a particular
block design problem.
By using the fact that $T_{\vec{w}} = \det M(\vec{w})$ is an integer,
it can be seen that a tolerance parameter of
\[
\varepsilon= \biggl(1+\frac{1}{T^*} \biggr)^{{1}/{m}}-1 \simeq
\frac{1}{mT^*}
\]
ensures that the design $\vec{w}^*$ returned by the MISOCP approach is
(perfectly) optimal.
We have used this value of $\varepsilon$ in our
numerical experiments. When the value of $T^*$ is unknown, note that an
upper bound can be used
(e.g., the bound $T^* \leq\frac{1}{t}  (\frac{2N}{t-1}
)^{t-1}$ given by\vspace*{1pt} the
optimal design $\vec{w}=[\frac{N}{s},\ldots,\frac{N}{s}]^T$ for the
relaxed problem without integer constraints).
\end{remark}

To achieve a faster convergence,
a few variables can be set equal to $0$ or $1$ in order to break the
symmetry of the problem.
For example, if we search for a $D$-optimal design
in a class of exact designs with at least one treatment replicated
exactly $4$ times,
we can assume without loss of generality that
treatment~$1$ has replication number $4$, so
$w_{1,2}=w_{1,3}=w_{1,4}=w_{1,5}=1$ and
$w_{1,i}=0$ for all $i\in\{6,\ldots,t\}$.

The table in Figure~\ref{figconcgraphs}
gives information on the computing time required by CPLEX. In all four
situations,
the optimal design was found in the first seconds of computation. However,
note that the time required to obtain a certificate of optimality can
be much longer
[a few minutes for cases~(a) and~(d), and as much as 3~hours for case~(b)].
However, the bound on the $D$-efficiency provided by the MISOCP solver
after a few minutes
is already much better than the
standard bound of $D$-efficiency relative to the (constrained)
approximate optimal design.

This example also demonstrates that sometimes
we can use independent theoretical results to add some linear
constraints to the original optimum
design problem that can greatly\vadjust{\goodbreak} improve the computational efficiency.
Indeed, it has been conjectured that
every optimal block design with blocks of size two is (almost)
equireplicate for $t-1 \leq N \leq {t \choose 2}$. The conjecture is known
to hold for $t\leq11$~\cite{CE05}
and for all pairs $(t,N)$ such that $N\geq{t\choose 2}-t+2$~\cite{PBS98}.
The MISOCP solver required 333.7~s to obtain a certificate of optimality
of the
design plotted in Figure~\ref{figconcgraphs}(a)
in the class of equireplicate designs.
In contrast, several hours of computation are required
if we omit the constraints
on the replication numbers in the MISOCP formulation.

More computational results for optimal block designs can be found in
an earlier version of this manuscript that is available on the web~\cite
{SagnolH2013exactarxiv}.
In particular, we show that even for the case of standard
(unconstrained) exact design problems ($\mathcal{W}=\mathcal{W}_N$),
the MISOCP approach sometimes outperforms state-of-the-art algorithms
such as the $KL$-exchange procedure~\cite{AD92}.
The manuscript~\cite{SagnolH2013exactarxiv} also presents numerical
results on other criteria, such as $A$-optimality and $G$-optimality.

\subsection*{Locally $D$-optimal design in a study of chemical kinetics}

Another classical field of application of the theory of optimal
experimental designs is the study of chemical kinetics.
Here,
the goal is to select the points in time at which a chemical reaction
should be observed,
to estimate the kinetic parameters $\vecc{\theta}\in\R^m$ of the
reaction (rates, orders, etc.).
The measurements at time $t$ are of the form $\vec{y}_t = \vecc{\eta}_t(\vecc{\theta}) + \vecc{\varepsilon}_t$,
where $\vecc{\eta}_t(\vecc{\theta})=[\eta_t^1,\ldots,\eta_t^k]^T$ is the
vector of the concentrations of $k$ reactants at time $t$
and $\vecc{\varepsilon}_t$ is a random error.
The kinetic models are usually
given as a set of differential equations, which can be solved
numerically to find the concentrations $\vecc{\eta}_t(\vecc{\theta})$
over time. Unlike the linear model described in the introduction of
this paper, in chemical kinetics the
expected measurements $\mathbb{E}[\vec{y}_t] = \vecc{\eta}_t(\vecc{\theta
})$ at time $t$
depend nonlinearly on the vector $\vecc{\theta}$ of unknown parameters
of the reaction.
So a classical approach is to search for a \textit{locally optimal
design} using
a prior estimate $\vecc{\theta}_0$ of the parameter, that is,
a design which would be optimal if the true value of the parameters was
$\vecc{\theta}_0$. To do this, the
observation equations are linearized around $\vecc{\theta}_0$, so in practice
we replace the observation matrix $A_t$ of each individual trial at
time $t$ by its \emph{sensitivity} at $\vecc{\theta}_0$,
which is defined\vspace*{2pt} as
\[
\left.F_t:= \frac{\partial\vecc{\eta}_t(\vecc{\theta})}{\partial\vecc{\theta}} \bigg|_{\vecc{\theta} = \vecc{\theta}_0} = %
\pmatrix{
\displaystyle\frac{\partial\eta_t^1}{\partial\theta_1} & \cdots& \displaystyle\frac{\partial
\eta_t^k}{\partial\theta_1}
\cr
\vdots& \ddots& \vdots
\cr
\displaystyle\frac{\partial\eta_t^1}{\partial\theta_m} & \cdots& \displaystyle
\frac{\partial
\eta_t^k}{\partial\theta_m} } \right|_{\vecc{\theta} = \vecc{\theta}_0} \in\R^{m\times k}.\vspace*{2pt}
\]

A classical example is presented in~\cite{AD92}, the study of two
consecutive reactions
\[
A \mathop{\rightarrow}^{\theta_1} B \mathop{\rightarrow}^{\theta_2} C.
\]
The chemical reactions are assumed to be of order $\theta_3$ and $\theta
_4$, respectively,
so the concentrations of the reactants are determined by the
differential equations
\begin{eqnarray}
\nonumber
\frac{d[A]}{dt} &=& -\theta_1 [A]^{\theta_3},
\\
\label{equadifconc} \frac{d[B]}{dt} &=& \theta_1 [A]^{\theta_3} -
\theta_2 [B]^{\theta_4},
\\
\nonumber
\frac{d[C]}{dt}& =& \theta_2 [B]^{\theta_4},
\end{eqnarray}
together with the initial condition $([A],[B],[C])|_{t=0} = (1,0,0)$.
These equations can be
differentiated with respect to $\theta_1,\ldots,\theta_4$, which yields
another set of
differential equations that determines the elements $\frac{\partial\eta
_t^j}{\partial\theta_i}$ of the sensitivity\vspace*{2pt} matrices.

We now assume that measurements can be performed at each $t\in
\mathcal{X}=\{0.2,0.4, \ldots,19.8,20\}$,
where the time is expressed in seconds,
and that the observed quantities are the concentrations of the
reactants $A$ and $C$, that is, $k=2$ and
$\vecc{\eta}_t^T = ([A](t), [C](t))$. We have solved numerically the
differential equations governing the
entries of $(F_t)_{t\in\mathcal{X}}$ for $\vecc{\theta}_0 :=
[1,0.5,1,2]^T$. These sensitivities are plotted in Figure~\ref{figsensitivities}.

\begin{figure}[t]

\includegraphics{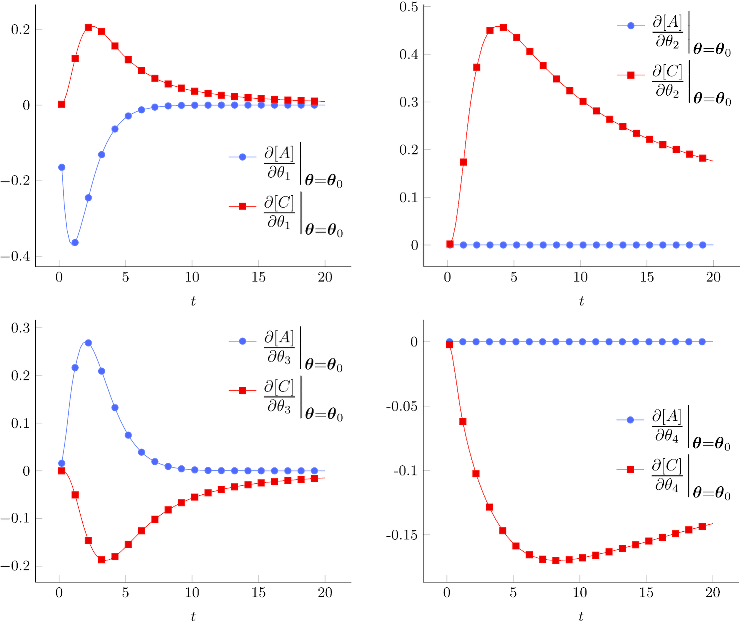}

\caption{Measurement sensitivities (entries of $F_t$) plotted against
time
for $\vecc{\theta}_0 = [1,0.5,1,2]^T$.}\label{figsensitivities}
\end{figure}
\begin{figure}

\includegraphics{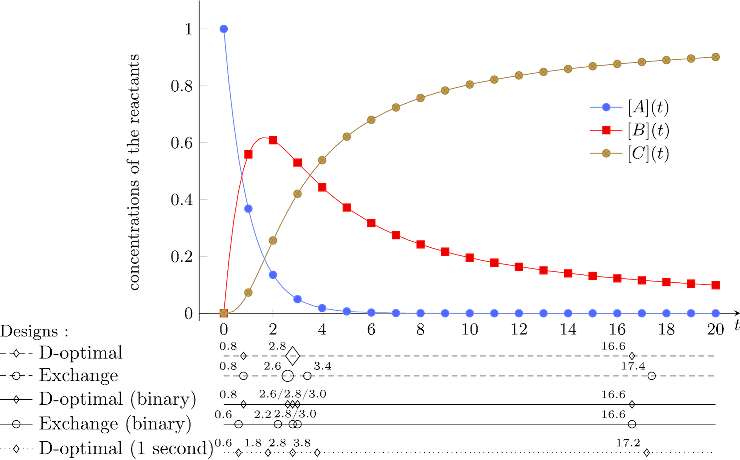}

\caption{Concentration\vspace*{1pt} of the reactants against time
(determined by solving equation~\protect\eqref{equadifconc},
assuming $\vecc{\theta}=\vecc{\theta}_0=[1,0.5,1,2]^T$).
Several designs are represented below the graph. The marks indicate the
time at which the measurements should be performed,
and the size of the marks indicate the number of measurements at a
given point in time. \textit{Binary} means that the
design space is restricted to designs having at most one measurement
for each $t\in\mathcal{X}$, and 1 \textit{second} means that
at least 1 second must separate 2 measurements.}\label{figabc}
\end{figure}

We used the MISOCP method to compute the exact $D$-optimal design of
size $N=5$ for this problem (for the prior estimate $\vecc{\theta}_0$).
The optimum consists in taking 1 measurement at $t=0.8$, 3 measurements
at $t=2.8$ and 1 measurement at $t=16.6$.
In comparison, the exchange algorithm (using the same settings as
described for the block designs, with $N^{R}=100$)
found a design with 1 measurement for each $t\in\{0.8,3.4,17.4\}$ and 2
measurements at $t=2.6$. This design is of course
very close to the optimum (its $D$-efficiency is $98.42\%$), but we
note that the \textit{true} optimum could
not be identified by the
exchange algorithm, even with a very large number of tries. We ran the
exchange procedure $N^{R}=5000$ times
which took 100 s and returned a design of $D$-efficiency $99.42\%$,
while the MISOCP found a provable optimal design after $25$ s
(CPLEX returned the status \texttt{MIP\_OPTIMAL}).

We plotted these designs in Figure~\ref{figabc} together with the
concentrations of the reactants over time when we
assume $\vecc{\theta}=\vecc{\theta}_0$. In the figure, we have also
plotted other designs which can be of interest to practitioners.
For example, it might be natural to search designs where at most 1
measurement is taken at a given point in time.
The exchange algorithm can also be adapted to the case of binary designs
(by rejecting candidate points that already support the design during
the exchange procedure).
It returned a design of $D$-efficiency $98.97\%$. The last case we have
considered is the following: assume that the
experimenter must wait at least one second after a measurement before
performing another measurement. This
constraint can be modeled as\vadjust{\goodbreak} a set of inequalities that can be added
into the MISOCP formulation,
\begin{eqnarray*}
&& \{w_{0.2} + w_{0.4} + w_{0.6} +
w_{0.8} +w_{1.0} \leq1,\\[-3pt]
&&\hspace*{6pt} w_{0.4} + w_{0.6}
+ w_{0.8} + w_{1.0} +w_{1.2} \leq1,\ldots,
\\[-3pt]
&&\hspace*{6pt} w_{19.2} + w_{19.4} + w_{19.6} +
w_{19.8} +w_{20.0} \leq 1\}.
\end{eqnarray*}
This model was solved in 42 s with CPLEX,
and the corresponding optimal design is depicted on the last row of
Figure~\ref{figabc}.
We do not know of any other algorithm that can handle this type of
exact design problem with several linear\vadjust{\goodbreak} constraints.

\begin{appendix}
\section*{Appendix: Other optimality criteria} \label{secothercrits}
\subsection{$A_K$-optimality} \label{secAk}
Another widely used criterion in optimal design is $A$-optimality, which
is defined by
\[
\Phi_A: M \to \cases{ \bigl(\operatorname{trace} M^{-1}
\bigr)^{-1}, &\quad$\mbox{if } M\mbox{ is nonsingular}$;
\vspace*{3pt}\cr
0, &\quad$
\mbox{otherwise}$.}
\]
More generally, it is possible to use the criterion of $A_K$-optimality
if the experimenter is
interested in the estimation of the parameter subsystem $\vecc{\vartheta}=K^T \vecc{\theta}$,
\[
\Phi_{A|K}: M \to \cases{ %
 \bigl(\operatorname{trace}
K^T M^{-} K \bigr)^{-1}, & \quad$\mbox{if }
\range K \subseteq\range M$;
\vspace*{3pt}\cr
0, &$\quad\mbox{otherwise}$.} %
\]

Here $M^-$ denotes a generalized inverse of $M$; see the discussion
following equation~\eqref{phiDK} in the \hyperref[sec1]{Introduction}.
Note that $\Phi_{A|K}$ coincides with $\Phi_A$ if $K=\vec{I}_m$,
and $\Phi_{A|K}$ reduces to the criterion of $\vec{c}$-optimality when
$K=\vec{c}\neq\vec{0}$ is a column vector.

The following lemma was already used in~\cite{Sagnol09SOCP},
under a slightly different form.
In fact, this lemma is a consequence of the Gauss--Markov theorem,
which states that
the variance--covariance matrix of the best linear unbiased estimator
of $K^T \vecc{\theta}$ is proportional to $K^T M(\vec{w})^- K$
(e.g., Pukelsheim~\cite{Puk93}).

\setcounter{theorem}{0}
\begin{lemma} \label{lemG-M}
Let $K$ be an $(m\times k)$-matrix,
and let $\vec{w}\in\R_+^s$ be a vector of design weights,
such that the estimability condition
$\range K \subseteq\range M(\vec{w})$
is satisfied. Define $I:=\{i\in[s]: w_i>0\}$. Then
\setcounter{equation}{0}
\begin{eqnarray}
\operatorname{trace} K^T M(\vec{w})^- K &=& \min
_{(Z_i)_{i \in I}} \sum_{i \in I} \quad
\frac{\Vert Z_i\Vert_F^2}{w_i}
\nonumber
\\[-8pt]
\label{G-Mineq}
\\[-8pt]
\nonumber
&&\qquad\mbox{s.t.}\qquad\sum_{i \in I}
A_i Z_i = K,
\nonumber
\end{eqnarray}
where the variables $Z_i$ ($i\in I$) are of size $\ell_i \times k$.
\end{lemma}

After some changes of variable, we obtain an SOC representation of $\Phi_{A|K}$:
\begin{proposition} \label{propSOCP-A}
Let $K$ be an $(m\times k)$-matrix, and let $\vec{w}\in\R_+^s$ be a
vector of design weights.
Then
\begin{eqnarray}
\Phi_{A|K} \bigl( M(\vec{w}) \bigr) &=& \max_{\vecc{\mu}\in\R_+^s, Y_i \in\R
^{\ell_i \times k}}
\sum_{i\in[s]} \mu_i
\nonumber
\\
\label{SOCP-A} &&\qquad\mbox{s.t.}\qquad\sum_{i\in[s]}
A_i Y_i = \biggl(\sum_{i\in[s]}
\mu_i\biggr) K,
\\
&&\qquad\qquad\hspace*{15pt}\forall i\in[s],\Vert Y_i \Vert_F^2
\leq w_i \mu _i.
\nonumber
\end{eqnarray}
\end{proposition}

\begin{pf}
We first handle the case where the estimability condition is not satisfied.
In this situation,
we have $\Phi_{A|K} ( M(\vec{w})  ) = 0$,
and we will see that the first constraint of problem~\eqref{SOCP-A}
can only be satisfied if $\sum_{i=1}^s \mu_i = 0$. Note that the second
constraint of problem~\eqref{SOCP-A} implies $Y_i=0$ for all $i\notin I$.
Hence every column of the matrix $\sum_{i=1}^s A_i Y_i$ must be in the
set $\range[\sqrt{w_1} A_{1},\ldots, \sqrt{w_s} A_{s}] = \range M(\vec{w})$.
Thus if (at least) one column of $K$ is
not included in the range of $M(\vec{w})$,
then we must have $\sum_i \mu_i = 0$.

Now, assume that the estimability condition
$\range K \subseteq\range M(\vec{w})$
holds, so that
\[
\Phi_{A|K} \bigl( M(\vec{w}) \bigr) = \bigl(\operatorname{trace}
K^T M(\vec {w})^{-} K \bigr)^{-1}>0.
\]
Let $Z_i$ ($\forall i \in I$) be optimal matrices\vspace*{1.5pt}
for the problem on the right-hand side of~\eqref{G-Mineq}.
Then for all $i\in I$, define
$\lambda_i := \operatorname{trace} w_i^{-1} Z_i^T Z_i= w_i^{-1} \Vert
Z_i \Vert_F^2$,
$\mu_i :=  (\sum_j \lambda_j )^{-2}\lambda_i$, and
$Y_i :=  (\sum_j \lambda_j )^{-1} Z_i$
[note that $\sum_{i\in I} \lambda_i = \operatorname{trace} K^T M(\vec
{w})^{-} K > 0$],
and for $i\in[s]\setminus I$ let $\mu_i:=0$ and $Y_i := 0 \in\R^{\ell
_i \times k}$.
We have\vspace*{1pt} $\sum_{i\in[s]} \mu_i = (\sum_{i\in I} \lambda_i)^{-1} = \Phi
_{A|K} ( M(\vec{w})  )$,
and by construction the variables $\mu_i$ and $Y_i$ satisfy the
constraints of problem~\eqref{SOCP-A}.

Conversely, let $\mu_i$ and $Y_i$ be feasible variables for
problem~\eqref{SOCP-A}.
If $\sum_i \mu_i=0$, then we have $\sum_i \mu_i < \Phi_{A|K} ( M(\vec
{w})  )$.
Otherwise,\vspace*{1.5pt} define $Z_i := (\sum_{i\in[s]} \mu_i)^{-1} Y_i$,
so that the variables $Z_i$ ($i\in I$) are feasible for the problem on
the right-hand side of~\eqref{G-Mineq}.
Hence
\begin{eqnarray*}
\operatorname{trace} K^T M(\vec{w})^- K &\leq & \sum
_{i\in I} \frac
{1}{w_i} \Vert Z_i
\Vert_F^2 = \sum_{i\in I}
\frac{1}{w_i (\sum_{i\in[s]} \mu_i)^2} \Vert Y_i \Vert _F^2
\\
&\leq & \sum_{i\in I} \frac{w_i \mu_i}{w_i (\sum_{i\in[s]} \mu_i)^2} \leq
\frac{1}{\sum_{i\in[s]} \mu_i}.
\end{eqnarray*}

Finally, we obtain the desired inequality by taking the inverse
\[
\sum_{i\in[s]} \mu_i \leq\Phi_{A|K} \bigl( M(\vec{w})  \bigr).
\]
This completes the proof of the proposition.
\end{pf}

\begin{corollary} \label{coroAK}
$\!$Let $K$ be an $m \times k$ matrix. The function$\vec{w} \mapsto\break
\Phi_{A|K} (M(\vec{w}) )$
is SOC-representable.
\end{corollary}

The reformulation of problem~\eqref{theBasicProblem} for the criterion
$\Phi=\Phi_{A|K}$
as an (MI)SOCP is indicated in Table~\ref{tabformulations}.

\begin{remark}[(The case of $\vec{c}$-optimality)]
The case of $\vec{c}$-optimality arises as a special case of both $A_K$
and $D_K$-optimality
when the matrix $K=\vec{c}\neq\vec{0}$ is a column vector ($k=1$). 
The two SOCP formulations (for $\Phi_{A|\vec{c}}$ and $\Phi_{D|\vec{c}}$- in Table~\ref{tabformulations})
are equivalent, which can be verified by the change of variables
$Y_i = J_{1,1}^{-1} Z_i$, $\mu_i = J_{1,1}^{-2} t_{i1}$. (Note that
here the matrix $J$ is of size $1\times 1$, i.e., a scalar.)
\end{remark}

We next show how Proposition~\ref{propSOCP-A}
can be used to obtain an SOC representation of $G$ and $I$-optimality.

\subsection{$G$-optimality}

A criterion closely related to $D$-optimality is the criterion of
$G$-optimality,
\[
\Phi_G: M \to \Bigl(\max_{i\in[s]}
\operatorname{trace} A_i^T M^- A_i
\Bigr)^{-1} = \min_{i\in[s]} \Phi_{A|A_i}(M),
\]
where the equality holds if we use the convention $\operatorname{trace}
K^T M^- K:=+\infty$
for all matrices $M$ that do not satisfy the estimability condition
($\operatorname{range} K \nsubseteq\operatorname{range} M$).
In the common case of single-response experiments for linear models,
the matrices $A_i$ are column vectors, and the scalar $\sigma^2 A_i^T
M(\vec{w})^- A_i$ represents the
variance of the prediction $\hat{\vec{y}}_i=A_i^T \hat{\vecc{\theta}}$.
Hence $G$-optimal designs
minimize the maximum variance of the predicted values $\hat{\vec{y}}_1,
\ldots, \hat{\vec{y}}_s$.

The $G$ and $D$-optimality criteria are related to each other by the celebrated
equivalence theorem of Kiefer and Wolfowitz~\cite{KW60}, which was generalized
to the case of multivariate regression ($\ell_i>1$) by Fedorov in
1972~\cite{Fed72}.
An important consequence of this theorem is that $D$- and $G$-optimal
designs coincide when the weight domain $\mathcal{W}$ is the standard
probability simplex $\mathcal{W}_\Delta$. However, exact $G$-optimal
designs do not necessarily coincide with their $D$-optimal
counterparts.
In a recent article~\cite{RJBM10}, the Brent minimization algorithm was proposed
to compute near exact $G$-optimal factorial designs. But in general,
we do not know any standard algorithm for the computation of exact
$G$-optimal designs
or $G$-optimal
designs over arbitrary weight domains $\mathcal{W}$
that are defined by a set of linear inequalities.

We know from Corollary~\ref{coroAK} that the concave functions $f_i:
\vec{w}\to\break \Phi_{A|A_i}  ( M(\vec{w})  )$
are SOC-representable, and hence their minimum is also concave and
SOC-representable.
An (MI)SOCP formulation of problem~\eqref{theBasicProblem} for the
criterion $\Phi= \Phi_G$ is
indicated in Table~\ref{tabformulations}. For the case where the
weight domain $\mathcal{W}$
is the probability simplex $\mathcal{W}_\Delta$, it gives a new
alternative SOCP
formulation for $D$-optimality. Note, however, that in this situation,
the SOCP formulation~\eqref{Dopt-oldSOCP}
for $D$-optimality from~\cite{Sagnol09SOCP} is usually more compact
(i.e., it involves fewer
variables and fewer constraints) than the $G$-optimality SOCP of
Table~\ref{tabformulations}.

\subsection{$I$-optimality} \label{secIopt}

Another widely used criterion is the one of $I$-optimality (or
$V$-optimality). Here, the criterion is the
inverse of the average of the variances of the predicted values $\hat
{\vec{y}}_1, \ldots, \hat{\vec{y}}_s$:
%
\[
\Phi_I: M \to \biggl(\frac{1}{s} \sum
_{i\in[s]} \operatorname{trace} A_i^T M^-
A_i \biggr)^{-1}.
\]

In fact, this criterion coincides with the $\Phi_{A|K}$ criterion, by
setting $K$
to any matrix of full column rank satisfying $KK^T = \frac{1}{s} \sum_{i=1}^s A_i A_i^T$; see, for example, Section~9.8 in~\cite{Puk93}.
Hence $\Phi_I$-optimal designs can be computed by SOCP.
Note that there is also a weighted version of $I$-optimality,
which can be reduced to an $A_K$-optimal design problem in the same manner.

\subsection{Bayesian optimal designs for nonlinear models} \label{secBopt}

For nonlinear models, the information matrix of a design $\vec{w}$
depends on the
value of the unknown parameter $\vecc{\theta}$ [we denote it by $M(\vec{w},\vecc{\theta})$];
see, for example,~\cite{CV95}. One way to handle this
challenging cyclic problem is to search  a design $\vec{w}$ maximizing the
expected value $\Phi_{\pi}(\vec{w})$ of the criterion $\Phi$ with
respect to some prior $\pi$,
\[
\Phi_{\pi}(\vec{w}):= \int_{\vecc{\theta}\in\R^m} \Phi \bigl( M(
\vec {w},\vecc{\theta}) \bigr)\, \pi(d\vecc{\theta}).
\]
Another alternative, known as \emph{standardized Bayesian design}, is
to search for a design maximizing the
expected efficiency
\[
\phi_{\pi}(\vec{w}):= \int_{\vecc{\theta}\in\R^m}
\frac{\Phi ( M(\vec
{w},\vecc{\theta})  )}{\max_{\vecc{\omega} \in\mathcal{W}}\, \Phi
( M(\vecc{\omega},\vecc{\theta})  )}\, \pi(d\vecc{\theta}).
\]

In a recent article, Duarte and Wong approximated such integrals by
finite sums
using Gaussian quadrature formulas~\cite{DW14}, in order to obtain SDP
formulations of Bayesian optimal design problems.
By using the same technique, we immediately see that the Bayesian
versions $\Phi_\pi$ and $\phi_\pi$ of
a SOC-representable criterion $\Phi$ are also SOC-representable (modulo
the approximation of the integral by a finite sum).
This offers the possibility of computing (constrained) exact Bayesian
designs by using MISOCP solvers.

Finally, we point out that the standard Bayesian versions of the $D$-
and $A$-criteria have forms that slightly
differ from the formulas given above,
and which have other statistical interpretations (see~\cite{CV95} for
more details),
\begin{eqnarray*}
\Phi_{D,\pi}(\vec{w})&:=& \int_{\vecc{\theta}\in\R^m} \log\!\det M(
\vec {w},\vecc{\theta}) \pi(d\vecc{\theta}),
\\
\Phi_{A,\pi}(\vec{w})&:=& - \int_{\vecc{\theta}\in\R^m} \operatorname
{trace} M(\vec{w},\vecc{\theta})^{-1} \pi(d\vecc{\theta}).
\end{eqnarray*}

Bayesian optimality with respect to the above criteria can also be formulated
as an (MI)SOCP, by combining the techniques used in the present paper
with those of~\cite{DW14}.
\end{appendix}

\printaddresses
\end{document}